\documentclass[12pt,a4paper]{amsart}

\usepackage[english]{babel}
\usepackage{amsthm,amsmath,amsfonts,amssymb,amscd,mathrsfs,comment}
\usepackage[symbol]{footmisc}
\usepackage{tikz}
\usepackage[colorlinks=true,linkcolor=blue,citecolor=blue,urlcolor=blue]{hyperref}
\usepackage{csquotes}
\usepackage{doi}

\newcommand{\PSL}{\operatorname{PSL}}

\theoremstyle{plain}
\newtheorem{theorem}{Theorem}[section]

\newtheorem{lemma}[theorem]{Lemma}

\newtheorem{proposition}[theorem]{Proposition}
\newtheorem{corollary}[theorem]{Corollary}

\newtheorem{question}[theorem]{Question}

\theoremstyle{definition}

\newtheorem{example}[theorem]{Example}

\begin{document}

\title[Brauer--Fowler Bounds for Elements of Odd Prime Order]{Brauer--Fowler Bounds for Elements of Odd Prime Order}

\author{Alessandro DIOGUARDI BURGIO, Kıvanç ERSOY, Edoardo SALATI}

\address{Università degli Studi di Palermo\\
Dipartimento di Matematica e Informatica\\
via Archirafi 34, 90123\\
Palermo\\
Italy}
\email{alessandro.dioguardiburgio@unipa.it}

\address{Freie Universit\"at Berlin\\
Fachbereich Mathematik und Informatik\\
Arnimallee 7\\
14195 Berlin\\
Germany}
\email{ersoy@zedat.fu-berlin.de}

\address{RPTU University Kaiserslautern-Landau\\
Department of Mathematics\\
Gottlieb-Daimler-Stra\ss e 48\\
67663 Kaiserslautern\\
Germany}
\email{edoardo.salati@rptu.de}

\begin{abstract}
The Brauer--Fowler theorem bounds the order of a finite simple group
in terms of the order of the centralizer of an involution. Hartley
proved an automorphism version, bounding the order of a finite simple
group in terms of the order of an automorphism and the order of its
fixed-point subgroup.

Strunkov asked whether, in the involution case, the order of the
centralizer can be replaced by the number of involutions commuting
with the given involution; this was recently answered affirmatively by
Skresanov. Motivated by this question, and in contrast with Hartley's
use of the order of a fixed-point subgroup, we study what can be
deduced from counting elements of prescribed prime order inside such
a subgroup.

We show that the direct analogue of Skresanov's result fails for elements of odd prime order.
We prove that if \(G\) is a finite simple group,
\(x\in G\) has odd prime order \(p\), \(C_G(x)\) contains at most \(k\)
elements of order \(p\), and the exponent of \(C_G(x)\) is at most
\(e\), then \(|G|\) is bounded in terms of \(k\) and \(e\). 
\end{abstract}
\maketitle
\section{Introduction}

The Brauer--Fowler theorem \cite[Theorem (2D)]{BrauerFowler}, a
cornerstone in the study of the classification of finite simple groups,
shows that the order of a finite simple group is bounded in terms of
the order of the centralizer of an involution.

A key feature of Brauer and Fowler's proof is the use of counting
arguments involving involutions, in particular involutions centralizing
a given involution. Indeed, one can restate their result as follows: if
\(G\) is a finite simple group with \(m\) involutions and if
\(n=|G|/m\), then \cite[Corollary (2I)]{BrauerFowler}
\[
        |G|\leq \left[\frac{n(n+1)}{2}\right]!.
\]

After the classification of finite simple groups, several variants and
refinements of the classical Brauer--Fowler theorem were obtained. We
first recall an automorphism version due to B.~Hartley, proved using
the classification of finite simple groups.

\begin{theorem}\cite[Theorem A']{Hartley1992}
There exists an integer-valued function
\[
        f:\mathbb N\times\mathbb N\longrightarrow\mathbb N
\]
such that if \(\alpha\) is an automorphism of a non-abelian finite
simple group \(G\) with \(|\alpha|=m\) and \(|C_G(\alpha)|\leq k\),
then
\[
        |G|\leq f(m,k).
\]
\end{theorem}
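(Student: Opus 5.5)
The plan follows the CFSG route, as Hartley did. We reduce to the families of simple groups and bound each family separately.

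\emph{Sporadic and alternating groups.} The $26$ sporadic groups contribute a constant, so they can be ignored. For $G=A_n$, the key observation is that $\alpha$ has order $m$ and lies in $\mathrm{Aut}(A_n)=S_n$ for $n\neq 6$. If $\alpha$ moves $r$ points in cycles, then the fixed points of $\alpha$ number $n-r\ge n-m\cdot(\text{number of cycles})$. Then $C_{A_n}(\alpha)$ contains $A_{n-r}$ acting on those fixed points, and also the centralizer of the cycle part. An $m$-cycle structure with $t$ cycles of length $m$ gives a centralizer containing $C_m\wr S_t$, intersected with $A_n$. Hence $|C_G(\alpha)|\ge \tfrac12 (n-r)!\, m^t t!$, and since $n\le (n-r)+mt$, we get $n$ bounded in terms of $k$ and $m$. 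The case $n=6$ is finite and can be absorbed.

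\emph{Groups of Lie type.} Write $G=G(q)$ of Lie rank $\ell$ over $\mathbb F_q$. There are two parameters to bound. The plan is:
\begin{enumerate}
\item Decompose $\alpha$ as inner-diagonal times field times graph. The field part has order dividing $m$, so it fixes a subfield $\mathbb F_{q_0}$ with $q\le q_0^{m}$.
\item Use the Lang--Steinberg theorem. $C_{\bar G}(\alpha)$ for the ambient algebraic group $\bar G$ has dimension at least $\dim \bar G/ m$, or at least $\operatorname{rank}$. More concretely, an element of order $m$ in the inner-diagonal group centralizes a maximal torus. Its centralizer then contains a subsystem subgroup of rank roughly $\ell/m$, or a torus of order at least $(q_0-1)^{\ell}/\text{const}$ up to a factor depending on $m$.
\item Conclude that $|C_G(\alpha)|\ge c(m)\,(q_0-1)^{\ell'}$ with $\ell'\to\infty$ as $\ell\to\infty$. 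The bound $|C_G(\alpha)|\le k$ then bounds both $\ell$ and $q_0$, hence $q$, hence $|G|$.
\end{enumerate}
For the graph and field cases, use that $C_G(\alpha)$ contains a group of Lie type over $\mathbb F_{q^{1/m}}$ of comparable rank, namely the fixed points of a Steinberg endomorphism.

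The main obstacle is step 2 for inner-diagonal $\alpha$ of order coprime to $p$ when $q$ is large and $\alpha$ is regular semisimple. In that case $C_G(\alpha)$ may be just a torus. Here I would use that every maximal torus $T$ of $G(q)$ has order at least roughly $(q-1)^{\ell}/|Z|$, or at least $\Phi$-polynomial values that are $\ge (q/2)^{\ell}$ up to constants. Thus even a regular element has centralizer growing with both $q$ and $\ell$. For unipotent $\alpha$ (order a power of $p$, so $p$-part bounded by $m$), the centralizer contains the centre of a Sylow $p$-subgroup and a large unipotent radical part, of order at least $q^{\ell}$. Combining these torus lower bounds over all semisimple and unipotent parts gives the uniform estimate.
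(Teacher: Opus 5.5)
The paper gives no proof of this statement; it is quoted from Hartley, whose argument relies on the classification, so your sketch has to stand on its own. The outline (CFSG; alternating groups by cycle type; Lie type by bounding rank and field) is the right one. The alternating case also works once you allow cycles of every length $d\mid m$, not only $m$: then $|C_{S_n}(\alpha)|=\prod_{d\mid m}d^{t_d}\,t_d!$, so each $t_d$, and hence $n=\sum_{d\mid m}d\,t_d$, is bounded.

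The Lie-type part, however, has a genuine gap at the rank bound. Every torus-type estimate you feed into step~3 is $O(1)$ when $q_0=2$: $c(m)(q_0-1)^{\ell'}$, $(q-1)^{\ell}/|Z|$ and $(q/2)^{\ell}$ all collapse. So nothing bounds $\ell$ for, say, inner automorphisms of $\PSL_n(2)$. Tori cannot supply this bound. Over $\mathbb F_2$ a maximal torus can be trivial (the diagonal torus of $\mathrm{SL}_n(2)$). For $\alpha=\operatorname{diag}(A,I_{n-2})$ with $A\in\mathrm{SL}_2(2)$ of order~$3$, the maximal torus $\langle A\rangle\times\{1\}$ through $\alpha$ has order~$3$. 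Your claim that a regular element's centralizer grows with $\ell$ is likewise false at $q=2$; what is true is that a regular element of order $m$ forces $\ell<m$.

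The missing idea is the one you only gesture at in step~2. An element of order $m$ has at most $m$ eigenvalues (in general, at most $m$ non-zero Kac coordinates). Hence the connected centralizer of a semisimple inner-diagonal $\alpha$ has semisimple rank at least $\ell+1-m$. The $F$-fixed points of the unipotent radical of an $F$-stable Borel subgroup of it then give at least $q^{\ell+1-m}\ge 2^{\ell+1-m}$ elements of $C_G(\alpha)$. These lie in $G$, since $G$ contains every unipotent element of the inner-diagonal group. This is exactly the multiplicity device of the paper's Lemmas~\ref{unipotent} and~\ref{semisimple}, where a large $\mathrm{GL}_{m_j}(Q)$-type factor gives growth independent of $q$. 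Tori are useful only afterwards, to bound $q$ once $\ell$ is bounded.

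Your other reductions are also incomplete. Take $\alpha=g\varphi$ with $g$ inner-diagonal and $\varphi$ a field automorphism of order $f$. It is not true that $C_G(\alpha)$ contains a full-rank group of Lie type over $\mathbb F_{q_0}$. In fact $\alpha^f$ is the inner-diagonal automorphism induced by $N=g\varphi(g)\cdots\varphi^{f-1}(g)$, of order $m/f$. Putting $\sigma=\operatorname{ad}(g)\circ\varphi$, one checks $C_{\overline G^F}(\alpha)=\overline G^F\cap\overline G^{\sigma}=C_{\overline G^{\sigma}}(N)$, with $\overline G^{\sigma}\cong\overline G^{\varphi}$ by Lang--Steinberg. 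For $N$ regular semisimple (e.g.\ in $\PSL_2(q_0^2)$) this is just a torus of the group over $\mathbb F_{q_0}$. The correct step is to reduce to the inner-diagonal case for $N$ inside $\overline G^{\sigma}$; graph and graph-field automorphisms need an analogous treatment.

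Finally, the unipotent bound ``at least $q^{\ell}$'' is asserted, not proved. For large rank it needs a Jordan-block multiplicity argument like Lemma~\ref{unipotent}. For mixed $\alpha=\alpha_s\alpha_u$ you must run the unipotent argument inside the reductive group $C_{\overline G}(\alpha_s)^{\circ F}$. The torus and unipotent subgroups found for $\alpha_s$ need not centralize $\alpha_u$, so ``combining the lower bounds'' does not settle this case.
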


Hartley's result has the following consequences.

\begin{theorem}\cite[Theorem A]{Hartley1992}
There exists an integer-valued function
\[
        f:\mathbb N\times\mathbb N\longrightarrow\mathbb N
\]
such that if \(x\) is an element of a finite group \(G\) with
\(|x|=m\) and \(|C_G(x)|\leq k\), then \(G\) has a solvable normal
subgroup of index dividing \(f(m,k)\).
\end{theorem}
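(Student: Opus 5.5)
We deduce Theorem A from Theorem A' by induction on \(|G|\), working along a chief series and treating the nonabelian chief factors with Theorem A'. Set \(N=\langle x\rangle\), so \(|N|=m\). Since \(|C_G(x)|\le k\), the conjugacy class of \(x\) has size \(|G:C_G(x)|\ge |G|/k\); this lower bound will not be used directly, but it shows where the difficulty lies. Let \(R\) be the solvable radical of \(G\). Since \(C_{G/R}(xR)\) may be larger than \(C_G(x)R/R\), we first need a bound on centralizers in quotients. The standard one is: for a \(\langle x\rangle\)-invariant normal subgroup \(M\), \(|C_{G/M}(xM)|\le |C_G(x)|\) whenever \(\gcd(|M|,m)=1\). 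In general, for arbitrary \(M\), one has \(|C_{G/M}(xM)|\le |C_G(x)|\cdot\) (something bounded in \(m\)) only after passing to coprime parts. To avoid this, I would work with the socle of \(G/R\) directly.

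\textbf{Step 1.} Reduce to \(R=1\). Since \(G/R\) has trivial solvable radical, it suffices to bound \(|G/R|\) in terms of \(m\) and \(k\), provided \(|C_{G/R}(xR)|\) is bounded in terms of \(m,k\). For this I would invoke the known lemma that in a finite group the centralizer of \(xR\) in \(G/R\) has order at most \(|C_G(x)|^{c(m)}\) for solvable \(R\). Its proof goes by induction along an \(x\)-invariant series of \(R\) with elementary abelian factors \(V\). For coprime factors use Glauberman's lemma, \(C_{G/V}(xV)=C_G(x)V/V\). For a \(p\)-factor with \(p\mid m\), write \(x=x_px_{p'}\); then \(|C_V(x_p)|\) is controlled because a \(p\)-element acting on \(V\) with small fixed space forces \(|V|\le |C_V(x_p)|^{|x_p|}\). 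This step is the main obstacle. The plan is to handle it by replacing \(x\) by powers \(x^{m/q}\) of prime order and using the Khukhro-type estimate \(|V|\le |C_V(y)|^{|y|}\) for a \(p\)-element \(y\) acting on an elementary abelian \(p\)-group.

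\textbf{Step 2.} With \(R=1\), let \(S=\mathrm{Soc}(G)=T_1\times\cdots\times T_r\), with the \(T_i\) nonabelian simple. Then \(G\) embeds in \(\mathrm{Aut}(S)\), so it suffices to bound \(r\) and each \(|T_i|\). First, \(x\) permutes the \(T_i\), and \(C_S(x)\) contains a diagonal copy of \(C_{T_{i}^{*}}(x^{\ell})\) for each \(\langle x\rangle\)-orbit of length \(\ell\), where \(T_i^*\) is a representative of the orbit. Each such factor is nontrivial: a nontrivial automorphism of a nonabelian simple group centralizes a nontrivial element, by the solvability of fixed-point-free automorphism groups (Thompson, or CFSG-based results). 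Hence the number of orbits is at most \(\log_2 k\), each orbit has length at most \(m\), and so \(r\le m\log_2 k\). Next, \(x^{\ell}\) induces an automorphism of \(T_i\) of order dividing \(m\) with \(|C_{T_i}(x^\ell)|\le k\). If \(x^\ell\) acts trivially on \(T_i\), then \(T_i\le C_G(x^\ell)\); hence \(|T_i|\le |C_G(x)|\)-type bounds fail, and we must instead use \(C_{T_i}(x^\ell)\) embedded diagonally. In that case the whole orbit product's centralizer contains the diagonal copy of \(T_i\), so \(|T_i|\le k\). Otherwise Theorem A' gives \(|T_i|\le f(m,k)\).

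\textbf{Step 3.} Combining, \(|S|\) is bounded and \(|G|\le|\mathrm{Aut}(S)|\) is bounded in terms of \(m,k\). Returning to the original \(G\), \(R\) is a solvable normal subgroup of bounded index. To get an index dividing \(f(m,k)\), we replace the bound by its factorial.
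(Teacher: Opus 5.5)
The paper does not prove this statement; it is quoted from Hartley's paper, where it is deduced from Theorem A'. Your outline is the standard deduction of A from A', and Steps 2 and 3 are sound. With \(R\) the solvable radical, \(G/R\) embeds in \(\operatorname{Aut}(S)\) for \(S=\mathrm{Soc}(G/R)=T_1\times\cdots\times T_r\). The orbit formula \(|C_S(x)|=\prod|C_{T_{i^*}}(x^{\ell})|\), together with the fact that a nontrivial automorphism of a nonabelian simple group has a nontrivial fixed point, gives \(r\le m\log_2 k\). That fact needs CFSG: Thompson's theorem covers only prime order, and here the order of \(x^\ell\) is arbitrary. Each \(T_i\) is then bounded, either directly when \(x^{\ell}\) acts trivially, or by Theorem A'. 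One small correction: the automorphism induced by \(x^{\ell}\) on \(T_i\), and the element \(xR\) itself, have order \(d\) dividing \(m\), not equal to \(m\). So you should use \(\max_{d\mid m} f(d,k)\).

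The problem is Step 1, which you call the main obstacle. The lemma you invoke is true, but the proof you sketch for it does not work. For an elementary abelian normal \(p\)-section \(V\) with \(p\mid m\), the order \(|C_V(x_p)|\) is not controlled by \(|C_G(x)|\), so the estimate \(|V|\le |C_V(x_p)|^{|x_p|}\) yields nothing. For example, take \(V=\mathbb F_4^{\,n}\) and \(G=V\rtimes\langle x\rangle\) with \(|x|=6\), where \(x\) acts as multiplication by a primitive cube root of unity. Then \(C_G(x)=\langle x\rangle\) has order \(6\), while \(x_2=x^3\) centralizes all of \(V\), which is arbitrarily large.

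Your belief that the quotient inequality needs coprimality is also mistaken. For every normal subgroup \(N\) of a finite group \(G\) and every \(x\in G\), one has \(|C_{G/N}(xN)|\le |C_G(x)|\). To see this, let \(H\) be the full preimage of \(C_{G/N}(xN)\). Then \(H\) permutes the coset \(xN\) by conjugation, so \(|H:C_H(x)|=|x^H|\le |N|\). Hence \(|H:N|\le |C_H(x)|\le |C_G(x)|\). Alternatively, compare \(\sum_{\chi\in\mathrm{Irr}(G/N)}|\chi(x)|^2\) with \(\sum_{\chi\in\mathrm{Irr}(G)}|\chi(x)|^2\).

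This one-line lemma replaces all of Step 1: no Glauberman lemma, no solvability of \(N\), no exponent \(c(m)\). It gives \(|C_{G/R}(xR)|\le k\), and with it your argument is complete.
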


Hartley also obtained the following locally finite consequence.

\begin{corollary}\cite[Corollary A1]{Hartley1992}
Let \(G\) be a locally finite group with an element whose centralizer
is finite. Then \(G\) has a locally solvable normal subgroup of finite
index.
\end{corollary}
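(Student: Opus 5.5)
The statement to prove is Hartley's Corollary A1: a locally finite group \(G\) with an element \(x\) such that \(C_G(x)\) is finite has a locally solvable normal subgroup of finite index. The plan is to deduce it from Theorem A by a local-to-global argument. Put \(k=|C_G(x)|\). First, \(x\) has finite order \(m\), since \(G\) is locally finite and \(\langle x\rangle\) is finitely generated. Let \(\mathcal F\) be the directed family of finite subgroups \(H\le G\) that contain \(x\) and \(C_G(x)\); the union of \(\mathcal F\) is \(G\). For every \(H\in\mathcal F\) we have \(C_H(x)=C_G(x)\), so \(|C_H(x)|=k\). Theorem A then gives a solvable normal subgroup of \(H\) whose index divides \(f(m,k)\), with the same bound for all \(H\).

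To obtain uniform control, take \(R(H)\), the solvable radical of \(H\). It contains the solvable normal subgroup supplied by Theorem A, so \(|H:R(H)|\le N:=f(m,k)\). I also need a bound on the derived length of \(R(H)\) independent of \(H\). Here I would invoke the Khukhro/Hartley–Meixner type results on fixed points: \(x\) acts on the solvable group \(R(H)\) with \(|C_{R(H)}(x)|\le k\). The derived length of \(R(H)\) is then bounded in terms of \(m\) and \(k\), via Hartley–Isaacs/Kurzweil-type bounds on the Fitting height combined with Khukhro's bounds for nilpotent groups. Call this bound \(d\). I expect this to be the main obstacle. Theorem A alone bounds only the index, not the derived length, and without a uniform \(d\) the limit of the radicals need not be locally solvable. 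If the literature result is not available to quote, an alternative is to work with \(R(H)\) directly: let \(S=\bigcup_{H}\bigcap_{H'\supseteq H}R(H')\) and show that this is locally solvable by a more careful argument. The uniform bound is the cleaner route.

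With \(d\) in hand, define \(S\) as the set of \(g\in G\) such that \(g\in R(H)\) for all sufficiently large \(H\in\mathcal F\). I would first check that \(R(H')\cap H\le R(H)\) whenever \(H\le H'\). This holds because \(R(H')\cap H\) is a solvable normal subgroup of \(H\). I would then show that \(S\) is a normal subgroup, each of whose finitely generated subgroups lies in some \(R(H)\) and is therefore solvable of derived length at most \(d\). So \(S\) is locally solvable, indeed solvable of length at most \(d\).

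Finally, I would prove \(|G:S|\le N\). Suppose instead that \(g_0,\dots,g_N\) lie in distinct cosets of \(S\). For each pair \(i\ne j\), the element \(g_i^{-1}g_j\notin S\), so there are arbitrarily large \(H\) with \(g_i^{-1}g_j\notin R(H)\). The set of such \(H\) is cofinal in \(\mathcal F\), but I need a single \(H\) that works for all pairs simultaneously. To get it, I would replace \(S\) by the intersection \(S^*\) of the \(d\)-th derived-series-closed limit, using a compactness (ultraproduct) argument: take a non-principal ultrafilter \(\mathcal U\) on \(\mathcal F\) refining the cofinal filter, and set \(S_{\mathcal U}=\{g:\{H: g\in R(H)\}\in\mathcal U\}\). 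This set is a subgroup because ultrafilters are closed under intersections. It is normal, and it is solvable of length at most \(d\), since the law \(\delta_d=1\) holds \(\mathcal U\)-almost everywhere. If \(g_0,\dots,g_N\) lay in distinct cosets of \(S_{\mathcal U}\), the finitely many conditions \(g_i^{-1}g_j\notin R(H)\) would all hold on a set in \(\mathcal U\). That would give some \(H\) with \(|H:R(H)|>N\), a contradiction. Hence \(S_{\mathcal U}\) is the required (locally) solvable normal subgroup of finite index.
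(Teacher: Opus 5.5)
The paper gives no proof of this corollary; it only quotes it from Hartley. Your argument therefore has to stand on its own, and its one defective step is the uniform bound \(d=d(m,k)\) on the derived length of \(R(H)\).

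No such result is available. Hartley--Isaacs and Kurzweil-type Fitting-height bounds concern coprime action. Khukhro's theorem on almost fixed-point-free automorphisms concerns automorphisms of prime order. For composite \(m\), your claim is at least as strong as a long-standing open problem. Take a finite nilpotent group \(P\) with a fixed-point-free automorphism \(\varphi\) of order \(m\), and put \(H=P\rtimes\langle\varphi\rangle\). Then \(H\) is solvable, so \(R(H)=H\), and \(C_H(\varphi)=\langle\varphi\rangle\) has order \(m\). Your claim would then bound the derived length of \(P\) in terms of \(m\) alone, which is known only for special \(m\) (e.g.\ \(m\) prime or \(m=4\)). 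Relatedly, your conclusion that the limit subgroup is \emph{solvable} of length at most \(d\) is stronger than Hartley's corollary. That is exactly why the corollary asserts only local solvability.

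The bound is not needed, though. Your remark that without it the limit of the radicals need not be locally solvable is mistaken: local solvability only asks that each finitely generated subgroup be solvable, with no uniformity. If \(g_1,\dots,g_r\in S_{\mathcal U}\), then \(\{H: g_1,\dots,g_r\in R(H)\}\) belongs to \(\mathcal U\) and so is nonempty. Hence \(\langle g_1,\dots,g_r\rangle\le R(H)\) for some \(H\), and this subgroup is solvable. Delete every mention of \(d\), and your verification that \(S_{\mathcal U}\) is a normal subgroup of index at most \(N=f(m,k)\) is correct and proves the corollary.

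The ultrafilter can also be avoided. Your observation \(R(H')\cap H\le R(H)\) for \(H\le H'\) shows that if \(g\in H\setminus R(H)\), then \(g\notin R(H')\) for every \(H'\supseteq H\) in \(\mathcal F\). Now take your first set \(S\), the elements lying in \(R(H)\) for all sufficiently large \(H\). Any finitely many elements outside \(S\) then lie simultaneously outside \(R(H^\ast)\) for one large \(H^\ast\). Apply this to the \(g_i^{-1}g_j\), choosing \(H^\ast\) to contain the \(g_i\) as well; this gives \(|G:S|\le N\) directly. \(S\) is locally solvable for the same reason as above.
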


Further variants of the Brauer--Fowler theorem for finite groups were
obtained by Guralnick and Robinson in \cite{GuralnickRobinson2020} and by Jabara in \cite{jab01}.

In \cite{Strunkov1985}, Strunkov proved an analogue of the
Brauer--Fowler theorem showing that, in certain specific situations,
the relevant data are not the involution centralizers themselves, but
rather the involutions contained in them. In particular, his result
gives finiteness statements for finite simple groups in terms of the
numbers of involutions occurring in the centralizers of non-conjugate
involutions, as well as related results for infinite periodic groups.
He later asked the following more general question.

\begin{question}\cite[Question 11.96 by Strunkov]{KourovkaNotebook}
Is it true that, for a given number \(n\), there exist only finitely many
finite simple groups each of which contains an involution which commutes
with at most \(n\) involutions of the group?
\end{question}

Recently, S.~Skresanov answered this question affirmatively. More
precisely, he proved the following result, using the classification of
finite simple groups.

\begin{theorem}\cite[Theorem 1]{Skresanov2024}\label{skresanov}
Let \(G\) be a simple locally finite group and let \(t\) be an involution
in \(G\). If at most \(n\) involutions of \(G\) commute with \(t\), then
\(G\) is finite and \(|G|\) is bounded in terms of \(n\) alone.
\end{theorem}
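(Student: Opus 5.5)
The plan is to split into the finite case, which is a case analysis via the classification of finite simple groups, and the infinite case, which is a reduction to the finite one. The only invariant I will use is $i(t)$, the number of involutions in $C_G(t)$. Since the sporadic groups are finitely many, they contribute only a constant, so it suffices to show that $i(t)\to\infty$ uniformly along every infinite family of finite simple groups.

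\emph{Alternating groups.} For $A_m$, an involution $t$ is a product of $2r$ disjoint transpositions. If $m-4r$ is large, $C_G(t)$ contains the alternating group on the fixed points of $t$, which has many involutions. If $r$ is large, $C_G(t)$ contains products of pairs of the transpositions of $t$, and these are already $\binom{2r}{2}$ commuting involutions. Either way $i(t)$ grows with $m$.

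\emph{Groups of Lie type.} For $G$ of Lie type over $\mathbb F_q$ I will use the known structure of involution centralizers (Aschbacher--Seitz in characteristic $2$, and the semisimple-class description in odd characteristic). In odd characteristic, $C_G(t)$ contains, up to a bounded-index subgroup, a central product of groups of Lie type of total rank comparable to that of $G$, over $\mathbb F_q$ or $\mathbb F_{q^2}$. In characteristic $2$, $C_G(t)$ contains the unipotent radical of a parabolic subgroup, which is an elementary abelian $2$-group, or has a large elementary abelian subgroup, of order at least roughly $q$. In both cases, once $q$ or the rank is large there is a subgroup such as $\mathrm{SL}_2(q)$, $\mathrm{PSL}_2(q)$ or a root subgroup inside $C_G(t)$ containing $\gg q$ involutions, or a large elementary abelian $2$-subgroup. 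Hence $i(t)\ge c\cdot\max(q,\text{rank})$. This gives the explicit bound in $n$ for finite $G$.

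\emph{Infinite case.} Let $G$ be infinite, simple and locally finite, and suppose towards a contradiction that $i(t)\le n$. Take a Kegel cover $\{(H_j,M_j)\}$: finite subgroups $H_j\ni t$ with maximal normal subgroups $M_j$, such that for each finite subset $F\subseteq G$ some $j$ has $F\subseteq H_j$ and $F\cap M_j=\emptyset$. By local finiteness and infinitude, the orders $|H_j/M_j|$ are unbounded. I will choose $F$ to contain $t$ together with the at most $n$ involutions of $C_G(t)$, and then bound the number of involutions of $C_{H_j/M_j}(tM_j)$ in terms of $n$. Combined with the finite case, this bounds $|H_j/M_j|$, which is a contradiction.

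This lifting step is the main obstacle. An involution $uM_j$ commuting with $tM_j$ need not lift to an involution commuting with $t$ when $M_j$ has even order. My first attempt will be a Sylow argument in the preimage $P$ of the four-group $V=\langle tM_j,uM_j\rangle$: take a Sylow $2$-subgroup $S$ of $P$ containing $t$, so that $S$ maps onto $V$. I would then try to show that $C_S(t)$ already meets many cosets, using the fact that $i(t)\le n$ forces $C_G(t)$ to have bounded $2$-rank.

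If this fails, I will instead argue directly that bounded $i(t)$ gives a bound on the $2$-rank of $C_G(t)$, and hence on the $2$-rank of $G$ via the usual Thompson/Feit-type argument. I would then invoke the classification of simple locally finite groups of finite $2$-rank, or with a Chernikov involution centralizer, which are linear over a locally finite field. Finally I would apply the Lie-type estimate above in the limit: a linear simple locally finite group over an infinite field has infinitely many involutions centralizing $t$, contradicting $i(t)\le n$.
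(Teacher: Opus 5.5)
The paper does not prove this statement. It is quoted from Skresanov, so there is no internal proof to compare against. I assess your proposal on its own terms and against the tools the paper assembles in Section~3. The architecture (finite case by the classification, infinite case reduced to it) is right, but two steps do not work as written.

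\textbf{The infinite case.} Your Kegel-cover route has exactly the gap you suspect, and the Sylow repair does not close it. Take $S\cong D_8$ with $S\cap M_j=Z(S)=\langle z\rangle$ and $t$ a non-central involution. Then $C_S(t)=\langle t,z\rangle$ maps onto $\langle tM_j\rangle$ only, so no element of $C_S(t)$ lies over $uM_j$ or $tuM_j$. A bound on the $2$-rank of $C_G(t)$ says nothing about this, so drop that route.

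Your fallback does work, and it is shorter than you make it, because the results of Section~3 are stated for an arbitrary prime. With $p=2$ and $\alpha$ conjugation by $t$, Corollary~\ref{cyclic-automorphism-min-p} (Blackburn plus Kegel--Wehrfritz) gives min-$2$ for $G$ directly. No Thompson-type $2$-rank transfer is needed; in an infinite locally finite group that transfer could only be done inside finite subgroups $\langle E,t\rangle$ anyway. The ``Chernikov involution centralizer'' variant is not available: only the $2$-subgroups of $C_G(t)$ are known to be Chernikov, not $C_G(t)$ itself. Theorem~\ref{classification-min-p-simple-locally-finite} then writes $G$ as an increasing union of finite simple groups $G_i\ni t$ of one Lie type over $\mathbb F_{q_i}$, with $q_i$ odd and $q_i\to\infty$. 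Your finite estimate applied to $C_{G_i}(t)\le C_G(t)$ gives the contradiction. This is Proposition~\ref{locfinsimp} with your finite estimate replacing Hartley's Theorem~C.

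\textbf{The finite Lie-type estimate.} This is the real content of the theorem, and in odd characteristic your justification names the wrong source of involutions. For $q$ odd, $\mathrm{SL}_2(q)$ has a unique involution and root subgroups have odd order. For $G=\PSL_2(q)$ the centralizer $C_G(t)$ is dihedral of order $q\pm1$ and contains no $\mathrm{SL}_2$, no $\PSL_2$ and no root subgroup, so your argument gives nothing in the most basic case.

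The roughly $q/2$ involutions there are the elements inverting a cyclic torus $T\ni t$. Such an element centralizes $t$ precisely because $t=t^{-1}$. This is exactly what separates $p=2$ from odd $p$: in Example~\ref{example:psl-counter-example} the centralizer of $x$ is just the cyclic torus, since elements inverting it do not centralize $x$.

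A complete argument must produce such involutions in general. Possible sources are torus-inverting elements, reflections in $\mathrm{GL}$-type blocks, or products of two elements of order $4$ in central products $\mathrm{SL}_2(q)\circ\mathrm{SL}_2(q)$. For algebraic rank at least $2$ it helps that an involution is never regular: $\alpha(t)=\beta(t)=-1$ forces $(\alpha+\beta)(t)=1$, so $C_{\overline G}(t)^\circ$ has positive semisimple rank. The involutions still have to be exhibited, though. Your alternating-group and characteristic-$2$ cases are fine. In characteristic $2$, $t$ centralizes a long root subgroup, elementary abelian of order $q$, in the centre of a Sylow $2$-subgroup containing it, much as in Lemma~\ref{unitriangular-abelian-subgroup}.
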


Theorem~\ref{skresanov} suggests the corresponding problem for
elements of odd prime order. Namely, if \(x\in G\) has order \(p\),
where \(p\) is an odd prime, does a bound on the number of elements of order
\(p\) contained in \(C_G(x)\) force a bound on \(|G|\)? In
Section~\ref{section2}, we show that this direct analogue is false.

The counterexamples in Section~\ref{section2} show that, for elements
of odd prime order, bounding only the number of elements of order \(p\)
in the centralizer is not sufficient. Our main result shows that
boundedness is recovered after imposing an additional restriction on the
exponent of the centralizer. This condition is weaker than bounding the
order of the centralizer, and so the result is not a direct consequence
of Hartley's theorem.

\begin{theorem}\label{mainthm}
There exists an integer-valued function
\[
        f:\mathbb N^2\to\mathbb N
\]
such that the following holds. Let \(G\) be a finite simple
group and let \(x\in G\) be an element of odd prime order \(p\). Suppose
that \(C_G(x)\) contains at most \(k\) elements of order \(p\), and that
\[
        \exp(C_G(x))\leq e.
\]
Then
\[
        |G|\leq f(k,e).
\]
\end{theorem}
\section{Variants of the problem and counterexamples}\label{section2}

We discuss several natural variants and show that they have negative
answers. The first one is the direct analogue of Skresanov's theorem for
elements of odd prime order.

\begin{question}\label{op1}
Does there exist a function
\[
        f:\mathbb N\longrightarrow\mathbb N
\]
such that, if \(G\) is a finite simple group with an element \(x\) of
order \(p\), for some odd prime \(p\), and if \(C_G(x)\) contains at
most \(k\) elements of order \(p\), then
\[
        |G|\leq f(k)?
\]
\end{question}

The corresponding locally finite question is the following.

\begin{question}\label{simplin}
Let \(G\) be a simple locally finite group with an automorphism
\(\alpha\) of order \(p\), where \(p\) is an odd prime. Suppose that
\(C_G(\alpha)\) contains only finitely many elements of order \(p\).
Must \(G\) be finite?
\end{question}

The following example gives a negative answer to both questions.
\begin{example}\label{example:psl-counter-example}
For \(i\geq 1\), set
\[
        q_i=3^{2^{2i}}
        \qquad\text{and}\qquad
        G_i=\PSL_2(q_i).
\]
Since \(\mathbb F_{q_i}\subseteq \mathbb F_{q_{i+1}}\), the groups
\(G_i\) form an increasing chain of finite simple groups.

We show that \(41\) divides \(|G_i|\) exactly once for every \(i\geq 1\).
For \(i=1\), this follows from
\[
        q_1=81,\qquad q_1+1=82=2\cdot 41,\qquad q_1-1=80.
\]
Let \(i\geq 2\). Write \(2^{2i}=8m_i\). Then \(41\nmid m_i\), and
\[
        3^8=1+41\cdot 160.
\]
Thus
\[
        q_i-1=(3^8)^{m_i}-1
        \equiv 41\cdot 160m_i \pmod {41^2}.
\]
Hence \(41\mid q_i-1\), but \(41^2\nmid q_i-1\). Moreover,
\(q_i\equiv 1\pmod {41}\), so \(41\nmid q_i+1\). Since
\[
        |\PSL_2(q_i)|=\frac{q_i(q_i-1)(q_i+1)}{2},
\]
the claim follows.

For \(i=1\), an element of order \(41\) lies in a non-split maximal
torus of order \((q_1+1)/2=41\). For \(i\geq 2\), an element of order
\(41\) lies in a split maximal torus, whose \(41\)-part has order
\(41\). In either case, the centralizer of such a semisimple element is
the corresponding maximal torus. Hence, for each \(i\), there exists
\(x_i\in G_i\) of order \(41\) such that \(C_{G_i}(x_i)\) contains
exactly \(40\) elements of order \(41\). Since \(|G_i|\to\infty\), the
answer to Question~\ref{op1} is negative.

Now choose \(x\in G_1\) of order \(41\), and set
\[
        G=\bigcup_{i\geq 1}G_i.
\]
Then \(G\) is an infinite simple locally finite group. Under the natural
embeddings \(G_1\leq G_i\), the element \(x\) remains semisimple and its
centralizer in \(G_i\) has \(41\)-part of order \(41\). Therefore
\(C_G(x)\) contains exactly the \(40\) non-trivial elements of
\(\langle x\rangle\) of order \(41\). Thus Question~\ref{simplin} also
has a negative answer.
\end{example}

We next consider elements of odd prime order whose centralizers contain
only a bounded number of involutions.

\begin{question}\label{op3}
Does there exist a function
\[
        h:\mathbb N\longrightarrow\mathbb N
\]
such that, if \(G\) is a finite group with an element \(x\) of odd
prime order \(p\), and if \(C_G(x)\) contains at most \(k\) involutions,
then \(G\) has a solvable normal subgroup of index bounded by \(h(k)\)?
\end{question}

\begin{example}\label{example:regular-unipotent}
Let \(n\geq 4\) be even, and choose an odd prime \(p_n>n\). Set
\[
        G_n=\mathrm{SL}_n(p_n).
\]
Let \(x_n\in G_n\) be a regular unipotent element, that is, an element whose Jordan form consists of a single Jordan block. Since \(p_n>n\), the
element \(x_n\) has order \(p_n\).

We claim that \(C_{G_n}(x_n)\) contains exactly one involution. Write
\[x_n=I+N\] where \(N\) is a nilpotent Jordan block of size \(n\). It
is standard that
\[
        C_{GL_n(p_n)}(x_n)
        =
        \left\{
        a_0I+a_1N+\cdots+a_{n-1}N^{n-1}
        : a_0\neq 0
        \right\};
\]
see \cite[1.5, Example (d)(3)]{spst}. In particular, every semisimple
element of \(C_{\mathrm{GL}_n(p_n)}(x_n)\) is scalar.

Now let \(y\in C_{G_n}(x_n)\) be an involution. Since \(p_n\) is odd,
\(y\) is semisimple, and hence scalar. Thus \(y=\pm I\). Since \(n\)
is even, \(-I\in \mathrm{SL}_n(p_n)\). Therefore \(C_{G_n}(x_n)\) contains
exactly one involution, namely \(-I\).

On the other hand,
\[
        G_n/Z(G_n)\cong \PSL_n(p_n)
\]
is non-abelian simple. Hence every solvable normal subgroup of \(G_n\)
is contained in \(Z(G_n)\). Therefore the index of any solvable normal
subgroup of \(G_n\) is at least
\[
        |\PSL_n(p_n)|,
\]
which tends to infinity as \(n\to\infty\). Thus no function \(h\) as in
Question~\ref{op3} can exist.
\end{example}

We next return to the more natural finite-group analogue in which one
counts elements of the same prime order as \(x\). In the spirit of
Hartley's finite-group theorem, we allow the bound to depend also on
that prime.

\begin{question}\label{question:finite-index-pk}
Does there exist a function
\[
        h:\mathbb N\times\mathbb N\longrightarrow\mathbb N
\]
such that, if \(G\) is a finite group with an element \(x\) of order
\(p\), and if \(C_G(x)\) contains at most \(k\) elements of order \(p\),
then \(G\) has a solvable normal subgroup of index bounded by
\(h(p,k)\)?
\end{question}

\begin{example}\label{example:psl2-pk}
Let
\[
        G_i=\PSL_2(3^{2^i}),\qquad i\geq 2.
\]
For each \(i\), the group \(G_i\) contains an element \(x_i\) of order
\(41\). Moreover, \(C_{G_i}(x_i)\) is a cyclic maximal torus whose
\(41\)-part has order \(41\). Hence \(C_{G_i}(x_i)\) contains exactly
\(40\) elements of order \(41\).

Since the groups \(G_i\) are finite simple groups and
\[
        |G_i|\to\infty,
\]
they cannot have solvable normal subgroups of index bounded in terms
of \(41\) and \(40\). Thus Question~\ref{question:finite-index-pk}
has a negative answer.
\end{example}

The preceding examples show that neither the direct simple-group
analogue nor the Hartley-type finite-group analogue follows from
bounding only the number of \(p\)-elements in the centralizer. We next
record that adding a bound on the number of involutions in the same
centralizer is still not sufficient.

\begin{question}\label{op2}
Does there exist a function
\[
        f:\mathbb N^3\longrightarrow\mathbb N
\]
such that, if \(G\) is a finite simple group with an automorphism
\(\alpha\) of odd prime order \(p\), and \(C_G(\alpha)\) contains at most
\(k\) elements of order \(p\) and at most \(m\) involutions, then
\[
        |G|\leq f(p,k,m)?
\]
\end{question}

\begin{example}\label{example:psl2-involutions}
Let
\[
        G_i=\PSL_2(3^{2^i}),\qquad i\geq 3.
\]
Then \(41\mid 3^{2^i}-1\), and the \(41\)-part of \(3^{2^i}-1\) has
order exactly \(41\). Hence \(G_i\) contains an element \(x_i\) of
order \(41\) whose centralizer is a cyclic split torus. This centralizer
contains exactly \(40\) elements of order \(41\).

Moreover, since \(i\geq 3\), the order of this torus is even and since it is cyclic,
it contains a unique involution. Let \(\alpha_i\) be the inner
automorphism of \(G_i\) induced by \(x_i\). Then
\[
        C_{G_i}(\alpha_i)=C_{G_i}(x_i)
\]
contains exactly \(40\) elements of order \(41\) and exactly one
involution. But
\[
        |G_i|\to\infty.
\]
Therefore no function \(f\) as in Question~\ref{op2} can exist.
\end{example}

The preceding example shows that bounding both the number of
\(p\)-elements and the number of involutions in the fixed-point subgroup
of an automorphism of order \(p\) still does not force a bound on the
order of the group. The following related example, in characteristic \(2\), shows
that the same phenomenon can occur even more sharply: the centralizer
may contain exactly \(p-1\) elements of order \(p\) and no involutions
at all.

We thank Professor Khukhro for pointing out the following example.

\begin{example}\cite[Example 1]{khumaz}\label{example:khukhro}
Let \(p\) be an odd prime, and set
\[
        G_n=\mathrm{SL}_2(2^{n(p-1)}).
\]
Since \(2^{p-1}\equiv 1\pmod p\), the group \(G_n\) contains a
semisimple element \(x_n\) of order \(p\). Let \(\varphi_n\) be the
inner automorphism induced by \(x_n\). Then \(C_{G_n}(\varphi_n)\)
is a cyclic maximal torus of order \(2^{n(p-1)}-1\). In particular,
\(C_{G_n}(\varphi_n)\) contains exactly \(p-1\) elements of order
\(p\).

Since \(2^{n(p-1)}-1\) is odd, this centralizer contains no
involutions. Nevertheless,
\[
        |G_n|\to\infty
\]
as \(n\to\infty\).
\end{example}

For a periodic group \(H\), let \(\pi(H)\) be the set of primes \(r\)
for which \(H\) contains an element of order \(r\).

\begin{question}\label{question:bounded-pi}
Does there exist a function
\[
        f:\mathbb N^2\longrightarrow\mathbb N
\]
such that the following holds? Let \(G\) be a finite non-abelian simple
group, and let \(x\in G\) be an element of odd prime order \(p\). Suppose
that \(C_G(x)\) contains at most \(k\) elements of order \(p\), and that
\[
        |\pi(C_G(x))|\leq m.
\]
Must
\[
        |G|\leq f(k,m)?
\]
\end{question}

\begin{example}\label{example:bounded-pi-counterexample}
Fix an odd prime \(p\). The pair
\[
        L_1(n)=2p(n+1)+1,
        \qquad
        L_2(n)=n+1
\]
is admissible. By the generalized Chen theorem for two admissible linear forms
\cite[Theorem 6.4]{MatomakiShao2017}, there exist infinitely many
integers \(n_i\) such that 
\[
        q_i:=2p(n_i+1)+1
\]
is prime and \(n_i+1\) has at most two prime divisors.

Set
\[
        G_i=\PSL_2(q_i).
\]
Since
\[
        p\mid \frac{q_i-1}{2},
\]
the group \(G_i\) contains an element \(x_i\) of order \(p\) in a split
maximal torus. This element is regular semisimple, and hence
\(C_{G_i}(x_i)\) is the corresponding cyclic split torus of order
\[
        \frac{q_i-1}{2}=p(n_i+1).
\]
Thus \(C_{G_i}(x_i)\) contains exactly \(p-1\) elements of order \(p\).
Moreover, since \(n_i+1\) has at most two prime divisors, we have
\[
        |\pi(C_{G_i}(x_i))|\leq 3.
\]
However,
\[
        |G_i|=\frac{q_i(q_i^2-1)}{2}\to\infty.
\]
Thus the answer to Question~\ref{question:bounded-pi} is negative.
\end{example}

\section{Auxiliary results}

In this section we collect the auxiliary results needed in the proof of
the main theorem. We begin by proving a more general result for
alternating groups.

\begin{lemma}\label{lemmaalt}
There exists a function
\[
        f:\mathbb N^2\to \mathbb N
\]
such that the following holds:

Let \(n\geq 7\), let \(G=A_n\), and let
\(\alpha\in \operatorname{Aut}(G)\) be an automorphism of prime order
\(p\). Suppose that \(C_G(\alpha)\) contains at most \(k\) elements of
order \(p\). Then
\[
        n\leq f(p,k).
\]
\end{lemma}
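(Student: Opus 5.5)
For \(n\geq 7\) we have \(\operatorname{Aut}(A_n)=S_n\) acting by conjugation, so I will identify \(\alpha\) with conjugation by a permutation \(\sigma\in S_n\) of order \(p\). Then \(C_G(\alpha)=C_{S_n}(\sigma)\cap A_n\). The aim is to show that if \(n\) is large compared with \(p\) and \(k\), this centralizer already contains more than \(k\) elements of order \(p\). I will argue by the cycle structure of \(\sigma\): write \(\sigma\) as a product of \(r\) disjoint \(p\)-cycles, with \(s=n-rp\) fixed points.

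\textbf{Case 1: \(s\) is large.} The centralizer \(C_{S_n}(\sigma)\) contains \(\operatorname{Sym}(\operatorname{Fix}(\sigma))\cong S_s\), and its even part contains \(\operatorname{Alt}(\operatorname{Fix}(\sigma))\).

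For \(p\) odd, every \(p\)-cycle on the fixed points is even. The number of \(p\)-cycles in \(A_s\) is
\[
\binom{s}{p}(p-1)!,
\]
which tends to infinity with \(s\). So \(s\) is bounded in terms of \(p\) and \(k\).

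For \(p=2\), I count even involutions instead, for example products of two disjoint transpositions on the fixed points. There are \(3\binom{s}{4}\) of these. This again bounds \(s\).

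\textbf{Case 2: \(r\) is large.} Here I use \(C_{S_n}(\sigma)\supseteq C_p\wr S_r\), where the base group is generated by the individual \(p\)-cycles \(c_1,\dots,c_r\) of \(\sigma\).

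For \(p\) odd, each \(c_i\) is even, so the base \(\langle c_1,\dots,c_r\rangle\cong C_p^r\) lies in \(A_n\). It contains \(p^r-1\) elements of order \(p\), which forces \(p^r-1\leq k\) and so bounds \(r\).

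For \(p=2\), the \(c_i\) are transpositions and hence odd. Instead I take the products \(c_ic_j\) with \(i<j\), which are even involutions commuting with \(\sigma\). There are \(\binom{r}{2}\) of them once \(r\geq 2\).

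\textbf{Conclusion.} Since \(n=rp+s\) and both \(r\) and \(s\) are bounded in terms of \(p\) and \(k\), we obtain \(n\leq f(p,k)\).

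The degenerate situations in the \(p=2\) case need a quick check:
\begin{itemize}
\item if \(r=1\), then \(\sigma\) is a transposition, which is not in \(A_n\) but still defines an automorphism; this case is covered by \(s\) large;
\item if \(s<4\), the bound on \(s\) is trivial.
\end{itemize}

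The only real obstacle I expect is the identification \(\operatorname{Aut}(A_n)=S_n\). This is exactly why the lemma assumes \(n\geq 7\), which avoids \(n=6\). Everything after that is routine counting.
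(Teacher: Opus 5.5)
Your proposal is correct and follows essentially the same route as the paper: identify \(\alpha\) with conjugation by \(\sigma\in S_n\), bound the number \(r\) of \(p\)-cycles through the elementary abelian group they generate (or its even part when \(p=2\)), and bound the number of fixed points by counting even elements of order \(p\) supported on them. The differences are only cosmetic. For the fixed points, you count single \(p\)-cycles when \(p\) is odd and double transpositions when \(p=2\), whereas the paper uniformly counts products of two disjoint \(p\)-cycles. For \(r\) when \(p=2\), you use the products \(c_ic_j\) rather than all of \(E\cap A_n\).
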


\begin{proof}
Since \(n\geq 7\), by \cite[Theorem 8.2A and Lemma 8.2B]{DiMo},
\[
        \operatorname{Aut}(A_n)\cong S_n .
\]
Let \(\sigma\in S_n\) induce \(\alpha\) by conjugation. Since
\(\alpha\) has order \(p\), so does \(\sigma\). Thus \(\sigma\) is a
product of \(r\geq 1\) disjoint \(p\)-cycles and fixes \(t=n-rp\)
points.

It is standard that
\[
        C_{S_n}(\sigma)
        \cong
        (C_p^r\rtimes S_r)\times S_t ,
\]
where \(S_t\) acts on the fixed-point set of \(\sigma\); see, for
example, \cite[Proposition 2.32]{KerberI}. Moreover,
\[
        C_G(\alpha)=C_{A_n}(\sigma).
\]

Let
\[
        E\cong C_p^r
\]
be the elementary abelian subgroup generated by the \(r\) disjoint
\(p\)-cycles occurring in \(\sigma\). We first bound \(r\) in terms of
\(p\) and \(k\).

If \(p\) is odd, then every \(p\)-cycle is even, and hence
\[
        E\leq C_{A_n}(\sigma).
\]
Therefore \(C_G(\alpha)\) contains all non-identity elements of \(E\),
and all of them have order \(p\). Thus
\[
        k\geq p^r-1,
\]
so
\[
        r\leq \log_p(k+1).
\]

If \(p=2\), then \(E\cap A_n\) consists of the products of an even
number of the transpositions occurring in \(\sigma\). Hence
\(E\cap A_n\) contains \(2^{r-1}-1\) non-identity elements, all of
order \(2\). Thus
\[
        k\geq 2^{r-1}-1,
\]
and therefore
\[
        r\leq 1+\log_2(k+1).
\]

Consequently, in all cases we have the uniform bound
\begin{equation}\label{lemmaalt:bound-r}
        r\leq R(p,k):=1+\log_p(k+1).
\end{equation}

We bound \(t\). If \(t\leq 2p\), then
\[
        n=rp+t
        \leq pR(p,k)+2p,
\]
and hence \(n\) is bounded in terms of \(p\) and \(k\).

Thus we may assume that \(t>2p\). Consider the elements of \(S_t\)
which are products of two disjoint \(p\)-cycles. Each such element
centralizes \(\sigma\), belongs to \(A_n\), and has order \(p\). The
number of such elements is
\begin{align*}
        \frac{1}{2}\binom{t}{p}(p-1)!
        \binom{t-p}{p}(p-1)!
        &=
        \frac{1}{2}
        \frac{t!}{p(t-p)!}
        \frac{(t-p)!}{p(t-2p)!}  \\
        &=
        \frac{t(t-1)\cdots(t-2p+1)}{2p^2}.
\end{align*}
Therefore
\[
        k
        \geq
        \frac{t(t-1)\cdots(t-2p+1)}{2p^2}
        \geq
        \frac{(t-2p+1)^{2p}}{2p^2}.
\]
It follows that
\[
        t\leq (2p^2k)^{1/(2p)}+2p-1.
\]
Using \eqref{lemmaalt:bound-r}, we obtain
\[
        n=rp+t
        \leq
        pR(p,k)+(2p^2k)^{1/(2p)}+2p-1.
\]
Thus \(n\) is bounded in terms of \(p\) and \(k\), as required.
\end{proof}

The preceding lemma will be used to handle the alternating groups in
the proof of the main theorem. Since the sporadic simple groups form a
finite family, the remaining work concerns finite simple groups of Lie
type. We recall the notation and the elementary number-theoretic
facts needed for that case.

Let \(\ell\) be a prime, and let \(\overline G\) be a simple adjoint
linear algebraic group over \(\overline{\mathbb F}_{\ell}\). Let \(F\)
be a Steinberg endomorphism of \(\overline G\), that is, an algebraic
endomorphism of \(\overline G\) with finite fixed-point group; see
\cite[Theorem 10.13]{st-endo}. We write
\[
        \overline G^F=\{g\in \overline G : F(g)=g\}.
\]
Then \(\overline G^F\) is a finite group of Lie type over the corresponding finite field $\mathbb{F}_{q}$ for an $\ell$-power $q$. The subgroup
\[
        G=O^{\ell'}(\overline G^F),
\]
generated by the \(\ell\)-elements of \(\overline G^F\), is a finite
simple group of Lie type whenever \(|G|\geq 60\). For definitions,
notation, and standard facts about finite groups of Lie type, we refer
to \cite{ca2,spst,gls3, lec-st} and \cite{st-endo}.

We shall use the following form of Zsigmondy's theorem.

\begin{theorem}[Zsigmondy {\cite{zsg}}]\label{zsgt}
Let \(a\) and \(b\) be coprime positive integers with \(a>b\). For every
positive integer \(n\), there exists a prime \(r\), called a primitive
prime divisor of \(a^n-b^n\), such that
\[
        r\mid a^n-b^n
        \qquad\text{and}\qquad
        r\nmid a^i-b^i
\]
for every positive integer \(i<n\), except in the following cases:
\begin{enumerate}
    \item \(n=1\) and \(a-b=1\);
    \item \(n=2\) and \(a+b\) is a power of \(2\);
    \item \(n=6\), \(a=2\), and \(b=1\).
\end{enumerate}
\end{theorem}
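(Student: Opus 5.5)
I would follow the classical cyclotomic route, which goes back to Zsigmondy and Birkhoff--Vandiver. For \(n\geq 1\) set
\[
        \Phi_n(a,b)=\prod_{\zeta}(a-\zeta b),
\]
where \(\zeta\) runs over the primitive \(n\)-th roots of unity. This is a homogenized cyclotomic polynomial, so \(\Phi_n(a,b)\) is a positive integer, and
\[
        a^n-b^n=\prod_{d\mid n}\Phi_d(a,b).
\]
A prime \(r\mid a^n-b^n\) with \(r\nmid ab\) is primitive exactly when the multiplicative order of \(ab^{-1}\) modulo \(r\) equals \(n\). Since \(\gcd(a,b)=1\), no prime divisor of \(a^n-b^n\) divides \(ab\). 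The order of \(ab^{-1}\) modulo \(r\) divides \(r-1\), so a primitive prime satisfies \(r\equiv 1\pmod n\), and in particular \(r\nmid n\). The whole argument therefore reduces to showing that \(\Phi_n(a,b)\) has a prime divisor not dividing \(n\).

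\textbf{Step 1 (arithmetic of non-primitive divisors).} Let \(r\mid\Phi_n(a,b)\) be a prime, and let \(d\) be the order of \(ab^{-1}\) modulo \(r\). I will show that if \(d<n\), then \(n=dr^j\) with \(j\geq 1\), so \(r\mid n\). Moreover \(r\) is then forced to be the largest prime divisor \(P\) of \(n\), since \(d\mid r-1\) and hence every other prime factor of \(n\) is smaller than \(r\). Next I will show that \(r^2\nmid\Phi_n(a,b)\), except when \(n=2\) and \(r=2\). For this I would use the lifting-the-exponent lemma: for odd \(r\), \(v_r(a^{m}-b^{m})=v_r(a^d-b^d)+v_r(m/d)\) whenever \(d\mid m\). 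Applying this through the Möbius formula \(\Phi_n(a,b)=\prod_{m\mid n}(a^m-b^m)^{\mu(n/m)}\) shows that the exponent of \(r\) in \(\Phi_n(a,b)\) is exactly \(1\). The case \(r=2\) needs separate care, and this is where exception (2) enters, through \(\Phi_2(a,b)=a+b\). Consequently, if no primitive prime exists, then \(\Phi_n(a,b)\in\{1,P\}\) (with \(\Phi_2\) a power of \(2\) allowed when \(n=2\)).

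\textbf{Step 2 (size estimate).} I will show that \(\Phi_n(a,b)>P\) for \(n\geq 3\), outside the single exception \((n,a,b)=(6,2,1)\). Each primitive \(\zeta\) with \(n\geq 3\) satisfies \(|a-\zeta b|^2=a^2+b^2-2ab\cos\theta\) with \(\cos\theta\leq\cos(2\pi/n)<1\). Hence \(|a-\zeta b|>a-b\geq 1\), and in fact \(|a-\zeta b|\geq\sqrt{a^2-ab+b^2}\geq\sqrt{3}\) for \(n\geq 7\) or so, using the angle bound. This gives \(\Phi_n(a,b)\geq c^{\varphi(n)}\) with \(c>1\). Against this I would compare \(P\leq n\) and \(\varphi(n)\geq\sqrt{n/2}\). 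Only finitely many small pairs \((n,a)\), with \(a=b+1\) small, then remain to be checked by hand. Among these, \(\Phi_6(2,1)=3=P\) is the genuine exception. The cases \(n=1\) and \(n=2\) are handled directly: \(\Phi_1(a,b)=a-b\) has no prime divisor exactly when \(a-b=1\), giving exception (1). Similarly, \(\Phi_2(a,b)=a+b\) fails to have an odd prime divisor exactly when \(a+b\) is a power of \(2\), giving exception (2).

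I expect the main obstacle to be making the lower bound in Step 2 uniform when \(a-b=1\). In that case the trivial bound \((a-b)^{\varphi(n)}\) is useless, so one must exploit the angular gap among primitive roots. The concrete approach I would try first is to bound \(\prod|a-\zeta b|\) below by \(\prod_\zeta|1-\zeta b/a|\cdot a^{\varphi(n)}\) and use \(a\geq 2\), then treat the finitely many residual small \(n\) numerically. The \(2\)-adic part of Step 1 is routine but must be done carefully.
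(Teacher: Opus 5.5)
The paper gives no proof of this statement; it simply quotes Zsigmondy's theorem from the literature. Your sketch therefore has to stand as a proof of the classical result. The overall route is the standard cyclotomic one, and Step 1 is sound:
\begin{itemize}
\item the order argument gives \(n=dr^j\) with \(r=P\);
\item the LTE/M\"obius computation gives \(v_r(\Phi_n(a,b))=1\) for odd \(r\mid n\);
\item the \(2\)-adic case reduces to \(\Phi_{2^j}(a,b)=a^{2^{j-1}}+b^{2^{j-1}}\equiv 2 \pmod 4\) for \(j\ge 2\);
\item \(n=1,2\) are handled correctly.
\end{itemize}
The case \(a-b\ge 2\) of Step 2 is also fine: \(\Phi_n(a,b)>2^{\varphi(n)}\ge 2^{P-1}\ge P\), using \(\varphi(n)\ge P-1\). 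That is better than the \(\sqrt{n/2}\) comparison you planned.

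The gap is Step 2 when \(a=b+1\). This case contains all the difficulty, including Bang's case \((a,b)=(2,1)\) and the exception. Your per-factor bound \(|a-\zeta b|\ge\sqrt{a^2-ab+b^2}\) needs \(\cos\theta\le 1/2\). That holds for every primitive \(\zeta\) only when \(n\le 6\), the opposite of what you wrote. For \(n\ge 7\) the root \(\zeta=e^{2\pi i/n}\) violates it; for instance \(|2-e^{2\pi i/n}|^2=5-4\cos(2\pi/n)\to 1\). So there is no uniform \(c>1\) per factor, and \(\Phi_n(a,b)\ge c^{\varphi(n)}\) does not follow.

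Your fallback is not a bound but the identity \(\Phi_n(a,b)=a^{\varphi(n)}\Phi_n(1,b/a)\). The factor \(\Phi_n(1,x)\) can drop below \(1\) (e.g.\ \(\Phi_6(1,\tfrac12)=\tfrac34\)), and \(x=1-1/a\to 1\) as \(a\) grows. Without a lower bound on \(\Phi_n(1,x)\) uniform in \(x\) and \(n\), the claim that only finitely many pairs \((n,a)\) remain is unsupported. You flagged this as the main obstacle, but the proposal does not resolve it.

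A short repair reduces everything to \(\Phi_n(2)\).
\begin{itemize}
\item Write \(|a-\zeta b|^2=(a-b)^2+2ab(1-\cos\theta)\). Since \(a-b\ge 1\) and \(ab\ge 2\), this gives \(|a-\zeta b|\ge |2-\zeta|\), strictly unless \((a,b)=(2,1)\). Hence \(\Phi_n(a,b)>\Phi_n(2,1)\) for \((a,b)\ne(2,1)\) and \(n\ge 3\).
\item It then suffices to show \(\Phi_n(2)\ge P\) for \(n\ge 3\), with equality only at \(n=6\). Keeping only the negative terms,
\[
\log\Phi_n(2)=\varphi(n)\log 2+\sum_{d\mid n}\mu(n/d)\log(1-2^{-d})>\varphi(n)\log 2+\sum_{d\ge 1}\log(1-2^{-d}),
\]
so \(\Phi_n(2)>0.288\cdot 2^{\varphi(n)}\).
\item Together with \(\varphi(n)\ge P-1\), this gives \(\Phi_n(2)>P\) whenever \(\varphi(n)\ge 6\).
\item The remaining \(n\ge 3\) are \(n\in\{3,4,5,6,8,10,12\}\), with \(\Phi_n(2)=7,5,31,3,17,11,13\) respectively. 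Only \(\Phi_6(2)=3=P\) fails, which is exception (3).
\end{itemize}
Combined with your Step 1, this completes the proof. Alternatively, you can use the classical reduction \(n=P^k m\) with \(P\nmid m\), \(\Phi_n(a,b)=\Phi_{Pm}(a^{P^{k-1}},b^{P^{k-1}})\) and \(\Phi_{Pm}(A,B)=\Phi_m(A^P,B^P)/\Phi_m(A,B)\).
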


A group \(H\) is said to
satisfy the minimal condition on \(p\)-subgroups, or min-\(p\), if every
descending chain of \(p\)-subgroups of \(H\) stabilizes.

\begin{lemma}\label{finite-p-elements-min-p}
Let \(H\) be a locally finite group and let \(p\) be a prime. If \(H\)
contains only finitely many elements of order \(p\), then \(H\) satisfies
min-\(p\).
\end{lemma}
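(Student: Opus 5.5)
We will show that every descending chain of \(p\)-subgroups of \(H\) stabilizes by proving something stronger: every \(p\)-subgroup \(P\) of \(H\) is finite, with \(|P|\) bounded by a function of \(m\), where \(m\) is the (finite) number of elements of order \(p\) in \(H\). A descending chain of \(p\)-subgroups whose orders are all bounded can only strictly decrease boundedly many times, so it stabilizes. It is in fact enough to show that each \(p\)-subgroup is finite, since a strictly descending chain starting at a finite group is finite.

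Let \(P\le H\) be a \(p\)-subgroup; being a subgroup of a locally finite group, \(P\) is locally finite. The first step is to bound the finite subgroups of \(P\). Let \(Q\le P\) be finite. Then \(Q\) is a finite \(p\)-group, so it is nilpotent and \(\Omega_1(Z(Q))\) is a nontrivial elementary abelian group whenever \(Q\ne 1\). Its nonidentity elements all have order \(p\), so \(|\Omega_1(Z(Q))|\le m+1\). This alone does not bound \(|Q|\), since \(Q\) could be cyclic of large order and contain only \(p-1\) elements of order \(p\). So we do not try to bound \(|Q|\). We aim instead for descending chain arguments applied directly to the set \(\Omega\) of elements of order \(p\).

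The cleaner route uses the standard fact that a locally finite group satisfies min-\(p\) if and only if its \(p\)-subgroups are Chernikov. For the plan, however, a more elementary argument is to verify the descending chain condition directly. Let \(P_1>P_2>\cdots\) be a strictly descending chain of \(p\)-subgroups. Set \(\Omega_i=\{g\in P_i : |g|=p\}\). This is a descending chain of subsets of the finite set of elements of order \(p\), so it is eventually constant, say \(\Omega_i=\Omega_N\) for all \(i\ge N\).

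The key step is to show that this forces \(P_i=P_{i+1}\) for large \(i\); this is the main obstacle. It is false as stated, since \(\mathbb{Z}/p^\infty\) has infinitely many subgroups \(p^j\)-torsion... more precisely, \(\mathbb{Z}/p^\infty\) has the descending chain property but contains infinite ascending chains, while \(\mathbb{Z}_p\)-like descending chains \(p^j\mathbb{Z}\) do not occur in torsion groups. So the argument has to use local finiteness and periodicity. The approach would be to invoke the theorem of Šunkov and Kegel–Wehrfritz: a locally finite \(p\)-group with finitely many elements of order \(p\) (so that its elementary abelian subgroups are bounded, of rank at most \(\log_p(m+1)\)) is Chernikov, that is, a finite extension of a direct product of at most \(\log_p(m+1)\) Prüfer groups. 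Chernikov groups satisfy the minimal condition on all subgroups.

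With that, each \(P_i\) is Chernikov. Fix a Sylow \(p\)-subgroup \(S\supseteq P_1\), which exists by Zorn's lemma and is also Chernikov by the same argument. The chain then lives inside the Chernikov group \(S\), which satisfies the minimal condition on subgroups, so the chain stabilizes. This proves min-\(p\).
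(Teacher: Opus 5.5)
Your final argument is essentially the paper's: each \(p\)-subgroup is Chernikov by a classical theorem, hence satisfies the minimal condition, and any descending chain of \(p\)-subgroups lies inside its first term, so the Zorn/Sylow detour is unnecessary; the paper cites Blackburn's theorem, which applies directly to locally finite \(p\)-groups with finitely many elements of order \(p\), while your appeal to \v{S}unkov/Kegel--Wehrfritz additionally needs the easy fact that abelian \(p\)-groups with finite socle are Chernikov, giving min on abelian subgroups. Do discard your opening claim that every \(p\)-subgroup is finite of bounded order, which is false (e.g.\ \(C_{p^\infty}\)), as your later remarks already concede.
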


\begin{proof}
Let \(P\) be a \(p\)-subgroup of \(H\). Then \(P\) is a locally finite
\(p\)-group and contains only finitely many elements of order \(p\).
By Blackburn's theorem on locally finite \(p\)-groups, \(P\) is a
Chernikov group; see \cite[Theorem 4.1]{Blackburn}. Hence \(P\)
satisfies the minimal condition on subgroups. Therefore every
\(p\)-subgroup of \(H\) satisfies the minimal condition on subgroups,
and so \(H\) satisfies min-\(p\).
\end{proof}

\begin{theorem}[Kegel--Wehrfritz]\label{kegel-wehrfritz-min-p}
Let \(p\) be a prime, and let \(A\) be a finite \(p\)-group acting by
automorphisms on a locally finite group \(H\). If \(C_H(A)\) satisfies
min-\(p\), then \(H\) satisfies min-\(p\).
\end{theorem}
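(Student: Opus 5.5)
The plan is to reduce to the case where \(A=\langle a\rangle\) is cyclic of order \(p\), and then to prove that in this case every \(p\)-subgroup of \(H\) has bounded rank, which forces it to be Chernikov.

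\emph{Reduction to order \(p\).} We argue by induction on \(|A|\). If \(|A|>p\), choose a normal subgroup \(B\) of \(A\) of index \(p\). Then \(A/B\) is cyclic of order \(p\) and acts on the locally finite group \(C_H(B)\). Its fixed points are
\[
C_{C_H(B)}(A/B)=C_H(A),
\]
which satisfies min-\(p\) by hypothesis. The order-\(p\) case therefore shows that \(C_H(B)\) satisfies min-\(p\). Induction applied to \(B\) acting on \(H\) then gives min-\(p\) for \(H\). So from now on \(A=\langle a\rangle\) with \(|a|=p\).

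\emph{A uniform rank bound on fixed points.} Since \(C_H(a)\) is locally finite and satisfies min-\(p\), its \(p\)-subgroups are Chernikov. I would also use the known fact (Belyaev/Kegel--Wehrfritz theory of min-\(p\) groups) that its maximal \(p\)-subgroups are conjugate. Together these give a number \(r\) such that every \(p\)-subgroup of \(C_H(a)\) has rank at most \(r\).

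\emph{Passing to finite \(a\)-invariant subgroups.} Let \(X\) be a finite subgroup of \(H\). Then
\[
F=\langle X^{a^i} : 0\le i<p\rangle
\]
is a finite \(a\)-invariant subgroup containing \(X\). In the finite group \(F\langle a\rangle\), take a Sylow \(p\)-subgroup \(S\) containing \(a\). Then \(Q=S\cap F\) is an \(a\)-invariant Sylow \(p\)-subgroup of \(F\), and \(C_Q(a)\le C_H(a)\) has rank at most \(r\). Now apply the theorem on finite \(p\)-groups with an automorphism of order \(p\) (Alperin; Khukhro's rank bound): the rank of \(Q\) is bounded by a function \(g(p,r)\). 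Every finite \(p\)-subgroup of \(X\) is conjugate in \(F\) into \(Q\), so it has rank at most \(g(p,r)\). Since every finite subgroup of \(H\) lies in such an \(F\), the finite \(p\)-subgroups of \(H\) have uniformly bounded rank.

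\emph{Concluding min-\(p\).} Let \(P\) be any \(p\)-subgroup of \(H\). It is a locally finite \(p\)-group, and all of its finite subgroups have rank at most \(g(p,r)\), so \(P\) has finite rank. A locally finite \(p\)-group of finite rank is Chernikov; in fact its elementary abelian subgroups are bounded, and one argues via Blackburn's theorem as in Lemma~\ref{finite-p-elements-min-p}, or via Shunkov's theorem. Thus every \(p\)-subgroup of \(H\) satisfies the minimal condition, that is, \(H\) satisfies min-\(p\).

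\emph{Main obstacles.} The main obstacle is the finite rank bound for a \(p\)-group in terms of the rank of the centralizer of an automorphism of order \(p\). This is a genuinely deep input, which I would quote rather than reprove. A secondary delicate point is extracting a \emph{uniform} rank bound \(r\) for the \(p\)-subgroups of \(C_H(a)\) from min-\(p\). For this I would first try to quote the conjugacy of Sylow \(p\)-subgroups in locally finite groups with min-\(p\).
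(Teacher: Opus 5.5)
Your argument is correct, but it is not the paper's route. The paper gives no proof of this statement; it simply quotes Kegel--Wehrfritz (Corollary~3.2) and Shumyatsky's survey.

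You instead reconstruct a proof in four steps. You reduce to $A=\langle a\rangle$ of order $p$ via the action of $A/B$ on $C_H(B)$. You obtain a uniform bound $r$ on the ranks of $p$-subgroups of $C_H(a)$ and transfer it to $a$-invariant Sylow subgroups $Q$ of finite $a$-invariant subgroups $F$. You then bound the rank of $Q$ in terms of $p$ and $r$. Finally you use the fact that locally finite $p$-groups of finite rank are Chernikov. This buys transparency: the theorem becomes a finite $p$-group rank estimate plus local bookkeeping. The cost is several external inputs, some of which can be lightened.

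\emph{The finite $p$-group bound is elementary rather than deep.} In $P\langle a\rangle$ take $A_0\le P$ maximal among abelian normal subgroups of $P\langle a\rangle$ contained in $P$. Then $C_P(A_0)=A_0$. Since $(a-1)^p=0$ on $\Omega_1(A_0)$, the Jordan-block argument gives $\operatorname{rank}A_0\le pr$. Finally, $p$-subgroups of $\operatorname{Aut}A_0$ have rank bounded in terms of $\operatorname{rank}A_0$.

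\emph{Sylow conjugacy in min-$p$ groups is not needed.} That conjugacy is a genuine, non-trivial theorem, and you can bypass it. A $p$-subgroup failing the minimal condition contains a countable one failing it, and the $a$-invariant closure of that subgroup is countable. So you may assume $H$ countable. Write $H=\bigcup_i F_i$ with $F_1\le F_2\le\cdots$ finite and $a$-invariant. Choose $a$-invariant Sylow subgroups $S_i$ of $F_i$ with $S_i\le S_{i+1}$, by extending $S_i\langle a\rangle$ to a Sylow subgroup of $F_{i+1}\langle a\rangle$. Then $S=\bigcup_i S_i$ is a single $p$-subgroup of $H$, $C_S(a)$ is Chernikov, and its rank bounds every $C_{S_i}(a)$ at once.

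\emph{The last step needs the right citation.} Rely on \v{S}unkov's theorem, or on the standard fact that locally finite $p$-groups of finite rank are Chernikov. Do not rely on Blackburn's theorem in the form used in Lemma~\ref{finite-p-elements-min-p}. Finite rank does not give finitely many elements of order $p$, as the locally dihedral group $C_{2^\infty}\rtimes C_2$ shows, and $p=2$ is allowed in this statement.
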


\begin{proof}
It follows from \cite[Corollary 3.2]{KegelWehrfritz}; see also
\cite[Proposition 3.6]{ShumyatskySurvey}.
\end{proof}

\begin{corollary}\label{cyclic-automorphism-min-p}
Let \(H\) be a locally finite group, and let \(\alpha\in\operatorname{Aut}(H)\)
have order \(p\), where \(p\) is prime. If \(C_H(\alpha)\) contains only
finitely many elements of order \(p\), then \(H\) satisfies min-\(p\).
\end{corollary}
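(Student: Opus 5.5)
The statement to prove is Corollary~\ref{cyclic-automorphism-min-p}. I would apply Theorem~\ref{kegel-wehrfritz-min-p} with \(A=\langle\alpha\rangle\), which is a cyclic group of order \(p\). In particular \(A\) is a finite \(p\)-group acting by automorphisms on the locally finite group \(H\). Its fixed-point subgroup is
\[
        C_H(A)=C_H(\alpha),
\]
since an element fixed by \(\alpha\) is fixed by every power of \(\alpha\).

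Next I would check that \(C_H(\alpha)\) satisfies min-\(p\). It is a subgroup of a locally finite group, so it is itself locally finite. By hypothesis it contains only finitely many elements of order \(p\). Lemma~\ref{finite-p-elements-min-p} therefore applies to \(C_H(\alpha)\) and shows that it satisfies min-\(p\).

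Theorem~\ref{kegel-wehrfritz-min-p} now gives that \(H\) satisfies min-\(p\). No step here is a real obstacle. The only points to verify are that local finiteness passes to subgroups and that \(C_H(A)\) coincides with \(C_H(\alpha)\) for cyclic \(A\), and both are immediate.
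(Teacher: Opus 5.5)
Your proposal is correct and follows the paper's proof: Lemma~\ref{finite-p-elements-min-p} gives min-\(p\) for \(C_H(\alpha)\), and Theorem~\ref{kegel-wehrfritz-min-p} with \(A=\langle\alpha\rangle\) transfers it to \(H\). You also check explicitly that \(C_H(\alpha)\) is locally finite and that \(C_H(A)=C_H(\alpha)\); the paper leaves both of these implicit.
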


\begin{proof}
By Lemma~\ref{finite-p-elements-min-p}, the fixed-point subgroup
\(C_H(\alpha)\) satisfies min-\(p\). Applying
Theorem~\ref{kegel-wehrfritz-min-p} with
\[
        A=\langle \alpha\rangle
\]
shows that \(H\) satisfies min-\(p\).
\end{proof}

\begin{theorem}[Belyaev--Borovik--Hartley--Shute--Thomas]
\label{classification-min-p-simple-locally-finite}
Let \(H\) be an infinite simple locally finite group, and let \(p\) be a
prime. Then \(H\) satisfies min-\(p\) if and only if \(H\) is a simple
group of Lie type over an infinite locally finite field \(K\) with
\[
        \operatorname{char} K\neq p.
\]
Equivalently, the infinite simple locally finite groups satisfying
min-\(p\) are precisely the simple locally finite groups of Lie type in
cross-characteristic \(p\).
\end{theorem}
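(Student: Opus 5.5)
We would prove the two implications separately. The implication from \(H\) to min-\(p\) is fairly routine. The implication from min-\(p\) to Lie type is the deep one, and for it we would combine a Kegel-cover reduction, the classification of finite simple groups, and the known classification of simple locally finite linear groups.

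\emph{Lie type implies min-\(p\).} Let \(H\) be a simple group of Lie type over an infinite locally finite field \(K\) with \(\operatorname{char}K=\ell\neq p\). Write \(H\) as the union of an ascending chain of finite groups of Lie type \(H_i\) of the same type over finite subfields \(K_i\subseteq K\).
\begin{itemize}
\item A Sylow \(p\)-subgroup of \(H_i\) lies in the normalizer of a maximal torus. It is therefore an extension of an abelian \(p\)-group of rank at most the Lie rank by a subgroup of the Weyl group, up to a bounded exceptional set of small cases.
\item Hence every \(p\)-subgroup of \(H\) is a locally finite \(p\)-group whose finite subgroups have bounded rank and bounded ``non-abelian part''.
\item Such a group is Chernikov, for instance by Blackburn's theorem \cite[Theorem 4.1]{Blackburn}, since it has only finitely many elements of order \(p\) in each abelian section of bounded rank.
\end{itemize}
So \(H\) satisfies min-\(p\).

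\emph{Excluding the defining characteristic.} Conversely, suppose \(H\) has Lie type over \(K\) with \(\operatorname{char}K=p\). Then a root subgroup is isomorphic to \((K,+)\), an infinite elementary abelian \(p\)-group. This contradicts min-\(p\), so the characteristic must differ from \(p\).

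\emph{Min-\(p\) implies Lie type.} Let \(H\) be infinite, simple, locally finite, and satisfy min-\(p\).
\begin{enumerate}
\item By min-\(p\), the Sylow \(p\)-subgroups of \(H\) are Chernikov and conjugate (Sylow theory for locally finite groups with min-\(p\)). Since \(H\) is infinite simple, it is not \(p\)-soluble-by-finite, so the Sylow \(p\)-subgroups are nontrivial, and indeed infinite.
\item Take a Kegel cover \(\{(H_i,N_i)\}\) of \(H\): finite subgroups \(H_i\) exhausting \(H\), with maximal normal subgroups \(N_i\) such that \(H_i\cap N_j=1\) for \(j>i\). Then the Kegel factors \(H_i/N_i\) are finite simple groups of unbounded order.
\item The \(p\)-subgroups of \(H\) have bounded sectional \(p\)-rank. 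The \(p\)-rank of a quotient \(H_i/N_i\) can exceed that of \(H_i\), so we use the Kegel property to embed large \(p\)-subgroups of \(H_i/N_i\) back into \(H\). This gives a uniform bound on the \(p\)-ranks of the \(H_i/N_i\).
\item By the classification, a family of finite simple groups of unbounded order with bounded \(p\)-rank consists, up to finitely many exceptions, of groups of Lie type of bounded Lie rank in characteristic different from \(p\). The other families are ruled out as follows:
\begin{itemize}
\item alternating groups \(A_n\) have \(p\)-rank about \(n/p\);
\item sporadic groups form a finite family;
\item groups of Lie type in characteristic \(p\) have \(p\)-rank growing with \(q\) and with the rank.
\end{itemize}
\item Bounded Lie rank yields a faithful linear representation of bounded degree of each \(H_i/N_i\). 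A standard ultraproduct argument, or the local-system argument of Hartley--Shute and Thomas, then shows that \(H\) is linear over a locally finite field.
\item Finally, the classification of simple locally finite linear groups (Belyaev, Borovik, Hartley--Shute, Thomas) identifies \(H\) as a group of Lie type over an infinite locally finite field. By the remark on root subgroups, its characteristic differs from \(p\).
\end{enumerate}

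\emph{Main obstacle.} The main difficulty is step 5, passing from the finite Kegel factors to linearity of \(H\) itself. The Kegel factors may a priori be of varying types, and their characteristics may not stabilize. The first step would be to pass to a subsequence on which the Lie type and the characteristic \(\ell\) are constant. Here \(\ell\) is forced to be constant because the \(p\)-part of torus orders must grow compatibly with a fixed Chernikov Sylow subgroup of finite rank. Once type and characteristic are constant, one would invoke the Hartley--Shute and Thomas machinery for ``simple locally finite groups with a Kegel cover of bounded Lie type'' to conclude linearity.
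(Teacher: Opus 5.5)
\textbf{There are genuine errors in your supporting arguments.} The paper gives no proof of this statement; it cites Belyaev, Borovik, Hartley--Shute and Thomas. Your skeleton for the hard direction is the standard one: Kegel cover, bounded $p$-rank of the factors, bounded Lie rank via the classification, linearity, then the classification of simple periodic linear groups. Your steps 5--6 are themselves appeals to those same papers. So the independent content of your proposal lies in the supporting arguments, and several of them fail.

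\textbf{Lie type implies min-$p$.} You cannot use Blackburn's theorem here. Its hypothesis is that the \emph{whole} locally finite $p$-group has only finitely many elements of order $p$, and $p$-subgroups of groups of Lie type in cross characteristic need not satisfy this. For instance, $\PSL_2(\overline{\mathbb F}_3)$ contains the locally dihedral $2$-group $C_{2^\infty}\rtimes C_2$, which has infinitely many involutions. For odd $p$, let $w\in\mathrm{SL}_3(\overline{\mathbb F}_2)$ be the permutation matrix of a $3$-cycle. Then $(tw)^3=1$ for every diagonal $t$ of determinant $1$ and $3$-power order. So the $3$-group generated by these $t$ and $w$ has infinitely many elements of order $3$. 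Finiteness of elements of order $p$ in abelian sections is not Blackburn's hypothesis. The conclusion is still true, but it needs a different tool. $H$ embeds in some $\mathrm{GL}_n(\overline{\mathbb F}_\ell)$, and $p$-subgroups of $\mathrm{GL}_n(F)$ with $\operatorname{char}F\neq p$ are Chernikov. Alternatively, locally finite $p$-groups of finite rank are Chernikov. Either route also removes the need to locate Sylow subgroups in torus normalizers.

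\textbf{Min-$p$ implies Lie type.} Step 1 is false. An infinite simple $p'$-group is trivially $p$-soluble, and such groups exist. $\PSL_2$ over $\bigcup_i\mathbb F_{2^{2^i}}$ contains no element of order $7$, since $2^{2^i}\not\equiv 0,\pm1\pmod 7$. Moreover, the paper's own Example~\ref{example:psl-counter-example} is an infinite simple locally finite group whose Sylow $41$-subgroups have order exactly $41$. Steps 2--4 survive this, because bounded $p$-rank includes rank $0$.

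Your resolution of the ``main obstacle'' does not survive it. Pinning down $\ell$ by matching $p$-parts of torus orders to a fixed Chernikov Sylow subgroup gives nothing when that subgroup is finite or trivial. Even when it is $C_{p^\infty}$ with $p$ odd, this already occurs in $\PSL_2(\overline{\mathbb F}_\ell)$ for every $\ell\neq p$.

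No $p$-local input is needed. Fix a non-principal ultrafilter $\mathcal U$. Using the bounded-degree embeddings $H_j/N_j\le\mathrm{GL}_n(F_j)$ from step 5, embed $H$ into $\prod_{\mathcal U}H_j/N_j\le\mathrm{GL}_n\bigl(\prod_{\mathcal U}F_j\bigr)$. This map is injective because $H_i\cap N_j=1$. Suppose the characteristics of the $F_j$ are not $\mathcal U$-almost constant. Then this field has characteristic $0$, and Jordan's theorem gives each $H_i$ an abelian normal subgroup of index at most $J(n)$. Its image in the non-abelian simple group $H_i/N_i$ is trivial, so $|H_i/N_i|\le J(n)$. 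This contradicts the unbounded order of the Kegel factors. Hence $H$ is linear in positive characteristic, which is exactly the input step 6 needs.

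\textbf{Step 3.} The Kegel property plays no role here. An elementary abelian subgroup of $H_i/N_i$ is the image of a Sylow $p$-subgroup of its preimage in $H_i$. So its rank is bounded by the sectional rank of the conjugate, Chernikov Sylow $p$-subgroups of $H$.
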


\begin{proof}
This is the classification of simple locally finite groups satisfying
min-\(p\), due independently to Belyaev, Borovik, Hartley--Shute, and
Thomas; see \cite{BelyaevChevalley, BorovikEmbeddings, HartleyShute, Thomas}.

\end{proof}
\section{Proofs of the main results}

In this section we prove the main theorem and a locally finite
counterpart. We first treat elements of finite simple groups of Lie type.

\begin{lemma}\label{unitriangular-abelian-subgroup}
Let \(q=p^a\) and let \(n\geq 2\). Every element of the unitriangular
group \(\mathrm{U}_n(\mathbb F_q)\) is contained in an abelian subgroup of order
at least \(q\). More precisely, for every \(u\in \mathrm{U}_n(\mathbb F_q)\),
there exists an abelian subgroup \(A\leq \mathrm{U}_n(\mathbb F_q)\) such that
\[
        u\in A
        \qquad\text{and}\qquad
        |A|\geq q.
\]
\end{lemma}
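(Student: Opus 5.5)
The natural candidate for \(A\) is \(Z(\mathrm{U}_n(\mathbb F_q))\langle u\rangle\), possibly enlarged. The centre of \(\mathrm{U}_n(\mathbb F_q)\) is the root subgroup
\[
        X_{1n}=\{I+cE_{1n} : c\in\mathbb F_q\},
\]
which has order \(q\). Since \(X_{1n}\) is central, the group
\[
        A=X_{1n}\langle u\rangle
\]
is abelian: it is generated by pairwise commuting elements, as \(u\) commutes with itself and with every element of the centre. It contains \(u\) and has order at least \(|X_{1n}|=q\). So the plan has three steps. First identify the centre. Then observe that a cyclic group times a central subgroup is abelian. Finally read off the order bound.

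For the first step, the only thing to verify is that \(X_{1n}\le Z(\mathrm{U}_n(\mathbb F_q))\) and that \(|X_{1n}|=q\); the reverse inclusion is not needed. I will check this directly. For \(g\in \mathrm{U}_n(\mathbb F_q)\) we have
\[
        gE_{1n}=E_{1n}=E_{1n}g,
\]
because the first row of \(E_{1n}\) picks out column \(1\) of \(g\) from the left, and \(E_{1n}g\) picks out row \(n\) of \(g\). Column \(1\) of \(g\) is \(e_1\) and row \(n\) of \(g\) is \(e_n^{T}\), since \(g\) is upper unitriangular. Hence \(I+cE_{1n}\) commutes with every \(g\). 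The map \(c\mapsto I+cE_{1n}\) is an injective homomorphism from \((\mathbb F_q,+)\), because \(E_{1n}^2=0\) when \(n\ge 2\). This gives \(|X_{1n}|=q\).

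There is no real obstacle. The only point needing care is that \(n\ge 2\) is used to ensure \(E_{1n}\) is strictly upper triangular, so that \(X_{1n}\) has order exactly \(q\) rather than being trivial. Note that the notation \(q=p^a\) here reuses \(p\) for the characteristic; the defining characteristic plays no role in this argument beyond \(|\mathbb F_q|=q\).
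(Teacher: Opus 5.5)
Your proposal is correct and matches the paper's proof: the paper also takes the central subgroup $Z_0=\{I+\lambda E_{1n} : \lambda\in\mathbb F_q\}$ of order $q$ and sets $A=\langle u,Z_0\rangle$. The only cosmetic difference is how centrality is checked: you show $gE_{1n}=E_{1n}=E_{1n}g$ for every $g$, while the paper writes $g=I+N$ with $N$ strictly upper triangular and uses $E_{1n}N=NE_{1n}=0$.
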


\begin{proof}
Let \(E_{ij}\) denote the usual matrix units, and set
\[
        Z_0=\{\,I+\lambda E_{1n}:\lambda\in\mathbb F_q\,\}.
\]
We first observe that \(Z_0\) is contained in the center of
\(\mathrm{U}_n(\mathbb F_q)\). Indeed, if \(N\) is any strictly upper triangular
matrix, then
\[
        E_{1n}N=0
        \qquad\text{and}\qquad
        NE_{1n}=0.
\]
Therefore \(I+\lambda E_{1n}\) commutes with \(I+N\) for every
\(I+N\in \mathrm{U}_n(\mathbb F_q)\). Thus
\[
        Z_0\leq Z(\mathrm{U}_n(\mathbb F_q)).
\]
Moreover, \(Z_0\) is isomorphic to the additive group of
\(\mathbb F_q\), and hence
\[
        |Z_0|=q.
\]

Now let \(u\in \mathrm{U}_n(\mathbb F_q)\). Define
\[
        A=\langle u,Z_0\rangle.
\]
Since \(Z_0\) is central, the subgroup \(A\) is generated by the cyclic
group \(\langle u\rangle\) and the central subgroup \(Z_0\). Hence \(A\)
is abelian. It contains \(u\), and since \(Z_0\leq A\), we have
\[
        |A|\geq |Z_0|=q.
\]
This proves the claim.
\end{proof}
\begin{lemma}\label{unipotent}
Let \(k\in \mathbb N\). There exists a function
\[
        B:\mathbb N\to\mathbb N
\]
such that the following holds. Let \(G\) be a finite simple group of Lie
type, and let \(x\in G\) be a unipotent element of odd prime order
\(p\). If \(C_G(x)\) contains at most \(k\) elements of order \(p\), then
\[
        |G|\leq B(k).
\]
\end{lemma}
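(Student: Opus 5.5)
Since \(x\) is unipotent of order \(p\), the defining characteristic of \(G\) is \(\ell=p\), so \(q=p^a\) for some \(a\geq 1\). The plan is to find, inside \(C_G(x)\), a large elementary abelian \(p\)-subgroup whose size grows with \(q\) and with the rank, and then use the hypothesis that \(C_G(x)\) has at most \(k\) elements of order \(p\).

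\textbf{Step 1: \(q\) is bounded.} A Sylow \(p\)-subgroup of \(G\) is the unipotent radical \(U=U^F\) of an \(F\)-stable Borel subgroup. Conjugating, I may assume \(x\in U\). The key observation, modelled on Lemma~\ref{unitriangular-abelian-subgroup}, is that \(Z(U)\) contains a root subgroup \(X_{\rho}\) for the highest root \(\rho\), or its twisted analogue. This subgroup has order at least \(q\), or at least \(q^{1/2}\) or \(q^{1/3}\) in the very twisted and triality cases. It is elementary abelian, except for the Suzuki and Ree cases, where one can still take its subgroup of elements of order \(p\) and dividing by 2. Then \(A=\langle x, Z(U)\rangle\) is abelian and lies in \(C_G(x)\), and \(\Omega_1(Z(U))\) consists of elements of order \(p\). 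Hence
\[
k\geq |\Omega_1(Z(U))|-1\geq q^{1/3}-1,
\]
so \(q\leq (k+1)^3\).

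\textbf{Step 2: the rank is bounded.} I would first try a direct argument, which avoids uniformity issues. \(C_G(x)\) contains \(Z(U)\) together with the centre of \(C_U(x)\). More concretely, \(x\) centralizes \(X_\rho\), and it also centralizes every root subgroup \(X_\beta\) whose commutators with the root elements in the support of \(x\) vanish. A cleaner route uses the fact that \(Z(C_U(x))\) contains \(\langle x\rangle\cdot Z(U)\), and that the number of elements of order \(p\) in \(C_U(x)\) is at least \(|C_U(x)|^{1/?}\)-ish. This is messy.

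A more robust alternative is the locally finite argument. Suppose the ranks are unbounded. Since \(q\) takes finitely many values, I pass to a subsequence with a fixed \(q\) and a fixed type family. I then embed the groups in an increasing chain, using natural embeddings \(\mathrm{SL}_n(q)\leq \mathrm{SL}_{n+1}(q)\) and the analogous ones for the classical families, with \(x\) chosen in the smallest group. But the centralizer counts must be controlled along the chain, and unipotent classes can change.

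I would instead use a simpler route. The dimension of \(C_{\overline G}(x)\) is at least the rank \(r\), since \(\dim C(g)\geq r\) for every \(g\), and the unipotent part of \(C_{\overline G}(x)\) has dimension at least about \(r\), for a regular unipotent element exactly \(r\). So \(|C_G(x)|_p\geq q^{c r}\) for some absolute constant \(c>0\). Moreover, a \(p\)-group \(P\) of order \(p^m\) with at most \(k\) elements of order \(p\) has \(\Omega_1\) of bounded order. Hence \(P\) has bounded sectional rank, and for \(p>\) the nilpotency class its exponent is large. That alone does not bound \(m\), as cyclic groups show.

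The main obstacle is therefore to exhibit many elements of order \(p\) in \(C_G(x)\) when the rank is large. My plan is to use Jordan/Springer theory. For classical \(G\) with natural module of dimension \(n\), write \(x\) in Jordan blocks, all of size at most \(p\) since \(x^p=1\). By Step 1, \(p\leq q\) is bounded, so the number of blocks is at least \(n/p\). The centralizer then contains a group acting on the multiplicity spaces, namely \(\mathrm{GL}\), \(\mathrm{Sp}\) or \(\mathrm{O}\) of dimension at least \(n/p^2\) for some block size. These reductive parts contain order-\(p\) transvections or root elements in number growing with \(n\), which bounds \(n\). For exceptional groups the rank is at most \(8\). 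Together with the bound on \(q\), this bounds \(|G|\).
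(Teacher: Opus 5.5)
Your final route is essentially the paper's proof: you bound \(q\) by a central root subgroup of a Sylow \(p\)-subgroup containing \(x\), and your highest-root version is the general Lie-type form of the paper's appeal to Lemma~\ref{unitriangular-abelian-subgroup}. You then bound the rank by choosing a Jordan block size \(s\) with multiplicity \(m_s\geq n/p^2\) and finding many elements of order \(p\) in the classical factor acting on the multiplicity space; counting root elements there, instead of the paper's elementary abelian subgroup of a Levi factor \(\mathrm{GL}_a\), is an inessential difference. In a write-up, delete the abandoned detours in Step~2, add the unitary factor \(\mathrm{GU}_{m_s}(q)\) to your list, and say why these unipotent elements give elements of order \(p\) in \(C_G(x)\) itself: they lie in the quasisimple group because \(\mathrm{GL}/\mathrm{SL}\), \(\mathrm{GU}/\mathrm{SU}\) and \(\mathrm{O}/\Omega\) are \(p'\)-groups, and the projection to \(G\) is injective on \(p\)-elements because the centre is a \(p'\)-group.
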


\begin{proof}
Let \(G\) be defined over \(\mathbb F_q\), where \(q=p^a\), and let
\(r\) be the Lie rank of \(G\). Since \(x\) is unipotent, \(p\) is the
defining characteristic. By Lemma~\ref{unitriangular-abelian-subgroup},
\(C_G(x)\) contains at least \(q-1\) elements of order \(p\). Hence
\[
        q-1\leq k,
        \qquad\text{so}\qquad
        q\leq k+1.
\]

The exceptional groups, including the Suzuki and Ree groups, have
bounded rank. We may therefore assume that \(G\) is classical. Let \(V\)
be a natural module for the corresponding quasisimple classical group
\(\widetilde G\), and put \(N=\dim V\). It is enough to bound \(N\).

Let \(\widetilde x\in\widetilde G\) be a preimage of \(x\). All Jordan
blocks of \(\widetilde x\) on \(V\) have size at most \(p\). Write
\[
        V=\bigoplus_{s=1}^{p} V_s,
\]
where \(V_s\) is the direct sum of the subspaces corresponding to the
Jordan blocks of size \(s\). If \(m_s\) is the number of these blocks,
then
\[
        N=\sum_{s=1}^{p}s m_s,
        \qquad
        \sum_{s=1}^{p}m_s\geq \frac{N}{p}.
\]
Hence, for some \(s\),
\[
        m_s\geq \frac{N}{p^2}.
\]

Fix such an \(s\). With respect to the decomposition of \(V_s\) into its
\(m_s\) Jordan blocks, the restriction of \(\widetilde x\) has the block
diagonal form
\[
        \operatorname{diag}(J_s,\ldots,J_s),
\]
where \(J_s\) is a unipotent Jordan block of size \(s\). In the linear
case, the following matrices commute with this block diagonal matrix:
\[
\begin{pmatrix}
I_s & \lambda_2 I_s & \lambda_3 I_s & \cdots & \lambda_{m_s} I_s\\
0   & I_s           & 0             & \cdots & 0\\
0   & 0             & I_s           & \cdots & 0\\
\vdots & \vdots     & \vdots        & \ddots & \vdots\\
0   & 0             & 0             & \cdots & I_s
\end{pmatrix},
\qquad \lambda_i\in\mathbb F_q.
\]
They form an elementary abelian \(p\)-subgroup of order \(q^{m_s-1}\).
This is also the \(\mathrm{GL}_{m_s}(q)\)-factor in the decomposition of
the centralizer of a unipotent element in the linear group; see
\cite[Theorem 2.3.9]{DeFranceschi2020}; see also \cite{FLO}.

For the unitary, symplectic and orthogonal groups, the centralizer
decomposition in \cite[Theorem 5.2.1]{DeFranceschi2020}; see also
\cite{FLO}, gives
\[
        C_{\widetilde G}(\widetilde x)=U\rtimes R,
\]
where \(U\) is a \(p\)-group and the component of \(R\) attached to the
Jordan blocks of size \(s\) is one of
\[
        \mathrm{U}_{m_s}(q),\qquad
        \mathrm{Sp}_{m_s}(q),\qquad
        \mathrm{O}_{m_s}^{\varepsilon}(q).
\]
In the symplectic and orthogonal cases the factor is symplectic or
orthogonal according to the parity of \(s\).

Assume first that this factor is \(\mathrm{U}_{m_s}(q)\), and put
\(a=\lfloor m_s/2\rfloor\). The stabilizer of a maximal totally
isotropic subspace of dimension \(a\) has a Levi subgroup containing
\(\mathrm{GL}_a(q^2)\). The linear construction above, applied over
\(\mathbb F_{q^2}\), gives an elementary abelian \(p\)-subgroup of order
\[
        (q^2)^{a-1}=q^{2a-2}\geq q^{m_s-3}.
\]

If the factor is \(\mathrm{Sp}_{m_s}(q)\), then \(m_s\) is even. The
stabilizer of a maximal totally isotropic subspace has Levi subgroup
\(\mathrm{GL}_{m_s/2}(q)\), and the linear construction gives an
elementary abelian \(p\)-subgroup of order
\[
        q^{m_s/2-1}.
\]

Finally suppose that the factor is \(\mathrm{O}_{m_s}^{\varepsilon}(q)\).
Since \(p\) is odd, we may work in the corresponding special orthogonal
group. If \(a\) is the Witt index, then \(a\geq m_s/2-1\). The stabilizer
of a totally singular subspace of dimension \(a\) has Levi subgroup
containing \(\mathrm{GL}_a(q)\). Thus it contains an elementary abelian
\(p\)-subgroup of order at least
\[
        q^{a-1}\geq q^{m_s/2-2}.
\]

Thus, in every classical case, \(C_{\widetilde G}(\widetilde x)\)
contains an elementary abelian \(p\)-subgroup of order at least
\(q^{m_s/4-2}\), after absorbing the cases \(m_s<8\). The kernel of the
map from \(\widetilde G\) to \(G\) has order prime to \(p\), apart from
finitely many low-dimensional cases. These cases may be absorbed into
the function \(B\). Hence \(C_G(x)\) contains at least
\[
        q^{m_s/4-2}-1
\]
elements of order \(p\).

This number is at most \(k\). Since \(q\leq k+1\), the integer \(m_s\) is
bounded in terms of \(k\). Since \(m_s\geq N/p^2\) and
\(p\leq q\leq k+1\), also \(N\), and hence the Lie rank, is bounded in
terms of \(k\). Therefore \(q\) and the rank are bounded in terms of
\(k\), and only finitely many groups can occur. This gives
\[
        |G|\leq B(k)
\]
for some function \(B:\mathbb N\to\mathbb N\).
\end{proof}
\begin{lemma}\label{semisimple}
Let \(k,e\in \mathbb N\). There exists a function
\[
        B:\mathbb N^2\to\mathbb N
\]
such that the following holds. Let \(G\) be a finite simple group of Lie
type, and let \(x\in G\) be a semisimple element of odd prime order \(p\).
If \(C_G(x)\) contains at most \(k\) elements of order \(p\), and
\[
        \exp C_G(x)\leq e,
\]
then
\[
        |G|\leq B(k,e).
\]
\end{lemma}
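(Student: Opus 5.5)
Let \(G=O^{\ell'}(\overline G^F)\) be defined over \(\mathbb F_q\) with \(q=\ell^a\), \(\ell\neq p\), and let \(r\) be the Lie rank. The plan is to bound \(p\), then \(q\), then \(r\), using the exponent hypothesis for the first two and the count of \(p\)-elements for the third.

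\emph{Bounding \(p\) and \(q\).} Since \(x\in C_G(x)\), we have \(p\leq e\). Next we bound \(q\). The centralizer \(C_{\overline G}(x)\) is a reductive group of maximal rank, so it contains an \(F\)-stable maximal torus \(\overline T\) with \(x\in \overline T\). Then \(T=\overline T^F\cap G\) is an abelian subgroup of \(C_G(x)\). Its order is, up to a factor bounded in terms of \(r\) (coming from the passage from \(\overline G^F\) to \(G\)), a product of cyclotomic-type values \(\Phi_{d}(q)\); in particular \(|T|\) is divisible by roughly \(\Phi_d(q)/\gcd\) for some \(d\geq 1\). I would then invoke Theorem~\ref{zsgt} with \(a=q\) or \(a=\ell\), \(b=1\), to find a primitive prime divisor \(s\) of \(\ell^{ad}-1\). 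This prime satisfies \(s\geq ad+1\), divides \(|T|\), and hence \(s\leq \exp C_G(x)\leq e\).

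A cleaner alternative avoids case distinctions. Every maximal torus \(\overline T^F\) contains a cyclic subgroup of order at least \(q-1\) over an appropriate subfield, or at least \((q-1)/\gcd(\dots)\). More precisely, \(\overline T^F\) surjects onto or contains elements of order comparable to \(q^{d}\pm 1\). Its exponent is therefore at least roughly \((q-1)/c(r)\), so \(q\) is bounded in terms of \(e\) once \(r\) is controlled. To decouple \(q\) from \(r\), I would look only at a rank-one piece: the torus contains an \(F\)-stable one-dimensional subtorus, whose fixed points form a cyclic group of order \(q\pm1\) or \(q^{d}\pm1\). This gives \(q-1\leq e\cdot c\) with \(c\) an absolute constant, coming from the centre of the simply connected cover intersecting a rank-one subtorus, which has order at most \(2\).

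\emph{Bounding the rank \(r\).} For the rank, use the count of elements of order \(p\). The element \(x\) lies in \(T\), and \(C_G(x)\) contains \(C_G(x)^{\circ F}\supseteq\) the \(p\)-part of \(T\). If \(r\) is large, \(C_{\overline G}(x)\) contains a large Levi-type subsystem subgroup. Here, as in the alternating case (Lemma~\ref{lemmaalt}), the fact that \(x\) has prime order means its eigenvalue multiplicities on the natural module, in the classical case, number at most \(p\) distinct classes. So some eigenspace has dimension at least \(N/p\), and \(C_G(x)\) contains a classical group of dimension \(m\geq N/p-O(1)\) over \(\mathbb F_q\) or \(\mathbb F_{q^d}\). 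That classical group contains elements of order \(p\): if \(p\mid q^j-1\) for some \(j\leq p-1\), then a subgroup \(\mathrm{GL}_j(q)\) embedded block-diagonally many times gives at least about \(p^{\lfloor m/(2j)\rfloor}\) elements of order \(p\) in a torus. Hence \(m\), and thus \(N\) and \(r\), is bounded in terms of \(k\) and \(p\leq e\). Exceptional types have bounded rank. With \(p,q,r\) all bounded, only finitely many groups occur.

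\emph{Main obstacle.} The main difficulty is the \(q\)-bound: one must uniformly exhibit, inside \(C_G(x)\) and inside \(G\) rather than \(\overline G^F\), an element of order growing with \(q\). I would handle this by taking the \(F\)-stable maximal torus through \(x\) and using that the orders of its cyclic factors are values of cyclotomic polynomials \(\Phi_d(q)\geq q-1\). The quotient by \(Z\) and \(\overline G^F/G\) costs only a factor bounded by \(r+1\). That factor is harmless once one first bounds \(r\) in terms of \(k\) and \(p\) as above, so I would carry out the rank bound before the \(q\)-bound.
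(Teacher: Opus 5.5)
Your final plan is correct and is essentially the paper's argument. You bound the rank first, using an elementary abelian $p$-subgroup whose rank grows linearly with the dimension of an eigenspace of $x$ of dimension at least $N/p$, and only then bound $q$ through the exponent of an $F$-stable torus of bounded rank in $C_G(x)$. The paper likewise handles the rank before $q$, and it combines Zsigmondy (for $\ell$ fixed, $a\to\infty$) with the remark that a torus of bounded rank and large order has large exponent.

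Drop the proposed decoupling via an $F$-stable one-dimensional subtorus: such a subtorus need not exist (a Coxeter torus of $\mathrm{PGL}_n$ with $n\geq 3$ prime has none). Also, the Zsigmondy prime by itself only bounds $a$. What actually bounds $\ell$ is that $\overline T^F$ is an abelian group on at most $r$ generators whose order grows with $q$.
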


\begin{proof}
Let \(G\) be defined over \(\mathbb F_q\), where \(q=\ell^a\), and let
\(r\) be the Lie rank of \(G\). Since \(x\) is semisimple of order \(p\),
we have \(p\neq \ell\). The elements
\[
        x,x^2,\ldots,x^{p-1}
\]
belong to \(C_G(x)\) and have order \(p\). Hence
\[
        p-1\leq k,
        \qquad\text{so}\qquad
        p\leq k+1.
\]

The exceptional groups have bounded rank. We may assume that \(G\) is
classical. Let \(V\) be a natural module for the corresponding
quasisimple classical group \(\widetilde G\), and put \(N=\dim V\). It
suffices to bound \(N\).

Let \(\widetilde x\in\widetilde G\) be a preimage of \(x\). Since the
minimal polynomial of \(\widetilde x\) divides \(X^p-1\), every
irreducible \(\mathbb F_q\langle\widetilde x\rangle\)-module occurring
in \(V\) has dimension at most \(p-1\). Write
\[
        V=\bigoplus_{j=1}^{t} V_j,
\]
where each \(V_j\) is the sum of all copies of one irreducible module
\(W_j\), or of one dual pair in the form-preserving cases. Thus, in the
linear or self-dual case,
\[
        V_j\cong W_j^{\oplus m_j},
\]
while in the dual-pair case
\[
        V_j\cong (W_j\oplus W_j^\ast)^{\oplus m_j}.
\]
Here \(m_j\) is the multiplicity. Since \(t\leq p\) and the modules
\(W_j\) have dimension at most \(p-1\), an unbounded \(N\) gives an
unbounded \(m_j\) for some \(j\).

Fix such a \(j\). For linear groups, \cite[Theorem 2.3.6]{DeFranceschi2020};
see also \cite{FLO}, gives a factor
\[
        \mathrm{GL}_{m_j}(Q),
\]
where
\[
        Q=\operatorname{End}_{\mathbb F_q\langle\widetilde x\rangle}(W_j)
\]
is a finite extension of \(\mathbb F_q\) of degree at most \(p-1\). For
unitary, symplectic and orthogonal groups,
\cite[Theorem 4.2.1]{DeFranceschi2020}; see also \cite{FLO}, gives the
corresponding factor on the multiplicity space. According to the type of
the constituent, this factor is linear, unitary, symplectic or
orthogonal. Restrictions coming from determinant or spinor norm can only
decrease the \(p\)-rank by a bounded amount.

First consider a linear factor \(\mathrm{GL}_{m_j}(Q)\). Let \(f\) be
the multiplicative order of \(|Q|\) modulo \(p\). Then \(f\leq p-1\), and
\(\mathrm{GL}_f(Q)\) contains an element \(A\) of order \(p\). Placing
copies of \(A\) on disjoint \(f\)-dimensional blocks gives an elementary
abelian \(p\)-subgroup of rank at least
\[
        \left\lfloor\frac{m_j}{p-1}\right\rfloor .
\]

Now consider a unitary, symplectic or orthogonal factor. Take a maximal
totally isotropic, respectively totally singular, subspace. Its
stabilizer has a Levi subgroup containing a linear factor
\(\mathrm{GL}_a(E)\), where \(E\) is a finite field. In the unitary and
symplectic cases we may take
\[
        a=\left\lfloor\frac{m_j}{2}\right\rfloor ,
\]
and in the orthogonal case the Witt index satisfies
\[
        a\geq \frac{m_j}{2}-1.
\]
Applying the preceding linear construction to \(\mathrm{GL}_a(E)\) gives
an elementary abelian \(p\)-subgroup of rank at least
\[
        \left\lfloor\frac{m_j}{2(p-1)}\right\rfloor-1.
\]

It follows that \(C_{\widetilde G}(\widetilde x)\) contains an elementary
abelian \(p\)-subgroup of rank at least
\[
        \left\lfloor\frac{m_j}{2(p-1)}\right\rfloor-1,
\]
up to a bounded loss coming from the special and Omega restrictions.
Passing from \(\widetilde G\) to \(G\) factors by a central subgroup. In
the linear and unitary cases this centre is cyclic, and in the other
classical cases it has bounded order. Hence \(C_G(x)\) contains at least
\[
        p^{\lfloor m_j/(2(p-1))\rfloor-2}-1
\]
elements of order \(p\). Since this number is at most \(k\), the integer
\(m_j\) is bounded in terms of \(k\). Since \(p\leq k+1\) and \(t\leq p\),
\(N\), and hence the Lie rank, is bounded in terms of \(k\).

We next bound \(q\). With the rank bounded, only finitely many types of
semisimple centralizers occur. The connected centralizer of
\(\widetilde x\) contains a non-trivial \(F\)-stable torus \(\mathbf T\).
Put \(T=\mathbf T^F\). The rank of \(\mathbf T\) is positive and bounded,
and \(|T|\) is a product of cyclotomic factors \(\Phi_m(q)\), with \(m\)
in a finite set depending only on the root datum; see
\cite{ca2,MalleTesterman}.

Suppose that \(q\) is unbounded. If \(\ell\) is unbounded, then \(|T|\),
and hence \(\exp T\), is unbounded because \(T\) has bounded rank. Thus
we may take \(q=\ell^a\) with \(\ell\) fixed and \(a\to\infty\). Passing
to a subsequence, a fixed factor \(\Phi_m(q)\) divides \(|T|\). By
Theorem~\ref{zsgt}, for all large \(a\), the integer \(\ell^{am}-1\) has
a primitive prime divisor \(r_a\). Then
\[
        \operatorname{ord}_{r_a}(\ell)=am,
        \qquad
        \operatorname{ord}_{r_a}(q)=m,
\]
so \(r_a\mid \Phi_m(q)\). Since \(r_a\geq am+1\), the primes \(r_a\) are
unbounded. Thus \(\exp T\) is unbounded. Its image in \(C_G(x)\) is a
central quotient with kernel of bounded order, since the rank is bounded.
Hence the exponent of this image is also unbounded, contradicting
\[
        \exp C_G(x)\leq e.
\]

Therefore \(q\) is bounded in terms of \(k\) and \(e\). The rank is
already bounded, so only finitely many finite simple groups of Lie type
can occur. Hence
\[
        |G|\leq B(k,e).
\]
\end{proof}

Now, we are ready to prove the main theorem.

\begin{proof}[Proof of Theorem~\ref{mainthm}]
If \(G\) is cyclic of prime order, then the conclusion is immediate.
Thus we may assume that \(G\) is non-abelian.

First note that
\[
        x,x^2,\ldots,x^{p-1}
\]
all belong to \(C_G(x)\) and have order \(p\). Hence
\[
        p-1\leq k,
\]
so \(p\leq k+1\).

By the classification of finite simple groups, \(G\) is either an
alternating group, a sporadic group, or a finite simple group of Lie
type.

If \(G=A_m\) is an alternating group with \(m\geq 7\), then
Lemma~\ref{lemmaalt} gives
\[
        m\leq f_0(p,k)
\]
for some function \(f_0\). Since \(p\leq k+1\), the degree \(m\), and
therefore \(|A_m|\), is bounded in terms of \(k\). The alternating
groups of degree \(m<7\) form a finite family.

The sporadic simple groups also form a finite family. Hence their
orders are bounded by an absolute constant.

We are left with finite simple groups of Lie type. Let \(G\) be
defined over \(\mathbb F_q\), where \(q=\ell^a\). Since \(x\) has prime
order, it is unipotent if \(p=\ell\), and semisimple if \(p\neq \ell\).

\medskip

\noindent\textbf{Case 1: \(x\) is unipotent.}
In this case \(p=\ell\). By Lemma~\ref{unipotent}, the order of \(G\)
is bounded in terms of \(k\), and hence in terms of \(k\) and \(e\).

\medskip

\noindent\textbf{Case 2: \(x\) is semisimple.}
In this case \(p\neq \ell\). By Lemma~\ref{semisimple}, the order of
\(G\) is bounded in terms of \(k\) and \(e\).

Thus, in all cases, \(|G|\) is bounded in terms of \(k\) and \(e\).
This proves the theorem.
\end{proof}

The following result completes our discussion.

\begin{proposition}\label{locfinsimp}
Let \(G\) be a simple locally finite group, and let \(\alpha\) be an
automorphism of \(G\) of odd prime order \(p\). Suppose that
\(C_G(\alpha)\) contains finitely many elements of order \(p\), and that
\(\pi(C_G(\alpha))\) is also finite.
Then \(G\) is finite.
\end{proposition}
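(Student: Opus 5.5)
The plan is to argue by contradiction and use the machinery on min-\(p\) recalled in the auxiliary section. Suppose \(G\) is infinite. Since \(C_G(\alpha)\) contains only finitely many elements of order \(p\), Corollary~\ref{cyclic-automorphism-min-p} shows that \(G\) satisfies min-\(p\). By Theorem~\ref{classification-min-p-simple-locally-finite}, \(G\) is then a simple group of Lie type over an infinite locally finite field \(K\) with \(\operatorname{char}K=\ell\neq p\). So \(G\) is a union of an increasing chain of finite simple groups of Lie type \(G(q_i)\) over finite subfields \(\mathbb F_{q_i}\subseteq K\), with \(q_i\to\infty\). The goal is to show that \(C_G(\alpha)\) contains elements of infinitely many prime orders, which contradicts the finiteness of \(\pi(C_G(\alpha))\).

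For the second step I would use the description of automorphisms of groups of Lie type over locally finite fields: \(\alpha\) is a product of an inner-diagonal, a field and a graph automorphism. I split into cases according to the type of \(\alpha\), up to replacing it by a suitable power or an \(\operatorname{Aut}(G)\)-conjugate.

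\emph{Field-type case.} Suppose \(\alpha\) involves a nontrivial field automorphism \(\varphi\) of order \(p\), possibly composed with inner-diagonal or graph parts. By a Lang--Steinberg type argument, \(\alpha\) is conjugate to a standard field (or graph-field) automorphism. Its fixed points then contain a group of Lie type over the fixed field \(F=K^{\varphi}\). Since \([K:F]=p\) and \(K\) is infinite, \(F\) is infinite and contains finite subfields \(\mathbb F_{\ell^{a}}\) with \(a\) unbounded. The orders \(\ell^{a}-1\) of split tori over these subfields then have unboundedly many primitive prime divisors by Theorem~\ref{zsgt}.

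\emph{Inner-diagonal case.} Here \(\alpha\) is induced by an element \(s\) of order \(p\) of the adjoint algebraic group. Since \(\ell\neq p\), the element \(s\) is semisimple, so \(s\) lies in a maximal torus \(\mathbf T\) of \(C_{\overline G}(s)^\circ\) that is defined over some finite subfield \(\mathbb F_{q_0}\subseteq K\). Then \(\mathbf T(\mathbb F_{q})\) centralizes \(s\) for every finite subfield \(\mathbb F_q\supseteq\mathbb F_{q_0}\) of \(K\). Its image in \(G\) is \(C_G(\alpha)\)-contained modulo a bounded central kernel. Its order is a product of cyclotomic values \(\Phi_m(q)\) with \(m\) from a finite set, and as \(q\to\infty\) along the subfields of \(K\), Zsigmondy again produces infinitely many primes, exactly as in the proof of Lemma~\ref{semisimple}.

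\emph{Pure graph case.} Here \(p=3\), \(G\) has type \(D_4\), and \(\alpha\) is triality. Its centralizer contains \(G_2(K)\), which is infinite and again has infinitely many prime divisors.

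The main obstacle I expect is the reduction in the second step: justifying that every automorphism of order \(p\) of the locally finite group \(G\) can be put into one of these standard forms. In particular, this requires that \(\alpha\) stabilizes a cofinal chain of finite subgroups \(G(q_i)\), so that the finite-group theory of automorphisms applies on each piece. I would first try to handle this by choosing the chain \(G(q_i)\) to be \(\alpha\)-invariant, replacing each member by the group generated by its \(\alpha\)-orbit and noting that this is still a Lie-type subgroup over a finite subfield. I would then apply the finite classification of automorphisms of prime order (Gorenstein--Lyons--Solomon) uniformly in \(i\). Once this is done, every case reduces to exhibiting tori over arbitrarily large finite subfields of \(K\) inside \(C_G(\alpha)\), and then to a Zsigmondy argument.
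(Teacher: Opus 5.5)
\textbf{Comparison with the paper.} Your opening is the paper's proof. Corollary~\ref{cyclic-automorphism-min-p} gives min-$p$ for $G$, and Theorem~\ref{classification-min-p-simple-locally-finite} then makes $G$ a group of Lie type over an infinite locally finite field $K$ of characteristic $\ell\neq p$. The paper finishes in one line by quoting Hartley's Theorem~C: an automorphism of order $p$ of such a group centralizes elements of infinitely many prime orders. You instead in effect reprove that theorem. You classify $\alpha$, find tori or subfield subgroups over finite subfields of unbounded size inside $C_G(\alpha)$, and apply Zsigmondy as in Lemma~\ref{semisimple}. This route is viable and more transparent, since it shows the same torus/Zsigmondy mechanism drives both the finite theorem and this locally finite counterpart. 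The price is that you need the structure of $\operatorname{Aut}(G)$ over locally finite fields (Steinberg's theorem for Chevalley groups over perfect fields and its analogues for twisted groups) together with the finite classification of prime-order automorphisms. As written, two points need repair.

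\textbf{The $\alpha$-orbit device.} It is unjustified. The subgroup generated by $G(q_i),\alpha(G(q_i)),\dots,\alpha^{p-1}(G(q_i))$ is just some finite subgroup of $G$. Recognizing it as a group of Lie type over a finite subfield would require overgroup theorems for subfield subgroups. It is also unnecessary. Having written $\alpha=i_g\gamma\varphi$ with $g$ in the adjoint group over $K$, choose a finite subfield $\mathbb F_{q_0}\subseteq K$ over which $g$ is defined. Then every $G(q)$ with $\mathbb F_{q_0}\subseteq\mathbb F_q\subseteq K$ is $\alpha$-invariant, since field automorphisms preserve each finite subfield of $K$. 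Apply Lang--Steinberg (via GLS) to $\alpha|_{G(q)}$ separately for each such $q$. Do not apply it to $\varphi$ over $K$, which is in general not a power of the Frobenius on $K$. The conjugating element depending on $q$ is harmless, because you only need $\pi(C_{G(q)}(\alpha))$ to be unbounded.

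\textbf{The graph case.} It misses a class. For $\ell\neq 3$ the automorphisms of order $3$ in the triality coset of $D_4(q)$ form two classes, with centralizers $G_2(q)$ and $\mathrm{PGL}_3^{\varepsilon}(q)$, where $q\equiv\varepsilon\pmod 3$. These classes are not $\operatorname{Aut}(G)$-conjugate, so ``$C_G(\alpha)$ contains $G_2(K)$'' does not cover all $\alpha$. The fix is cheap and uniform. Whenever $\alpha|_{G(q)}$ has no field part, it is induced by an automorphism $\sigma$ of $\overline G$ of order prime to $\ell$. Then $C_{\overline G}(\sigma)^\circ$ is reductive of positive rank and contains an $F$-stable maximal torus. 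Your inner-diagonal Zsigmondy argument then handles the inner-diagonal, graph and mixed cases at once.
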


\begin{proof}
Suppose, for a contradiction, that \(G\) is infinite. Since
\(C_G(\alpha)\) contains only finitely many elements of order \(p\),
it satisfies min-\(p\). By Theorem~\ref{kegel-wehrfritz-min-p},
\(G\) satisfies min-\(p\).

By Theorem~\ref{classification-min-p-simple-locally-finite}, the
group \(G\) is a simple group of Lie type over an infinite locally
finite field \(K\) of characteristic \(\ell\neq p\).

Now Hartley's theorem applies. More precisely, by
\cite[Theorem C]{Hartley1992}, if an infinite simple group of Lie type
over a locally finite field of characteristic different from \(p\) admits
an automorphism of order \(p\), then this automorphism fixes elements
of infinitely many distinct prime orders. Therefore
\[
        |\pi(C_G(\alpha))|=\infty,
\]
contrary to the hypothesis.

Hence \(G\) is finite.
\end{proof}

\section*{Acknowledgements}

Part of this work was carried out while the second author held a temporary
professorship at Technische Universität Dresden and the first author was
visiting the Institut für Algebra. The first two authors thank Ellen Henke
and the Institut für Algebra for their hospitality and excellent research atmosphere.

The first author is supported by the ``National Group for Algebraic and
Geometric Structures and their Applications'' (GNSAGA -- INdAM), and by
the SDF Sustainability Decision Framework Research Project -- MISE decree
of 31/12/2021 (MIMIT Dipartimento per le politiche per le imprese --
Direzione generale per gli incentivi alle imprese) -- CUP:~B79J23000530005,
COR:~14019279, Lead Partner:~TD Group Italia Srl, Partner:~University of
Palermo.

\end{document}